\newcommand{\eps}{\varepsilon}
\theoremstyle{plain}
\newtheorem{thm}{Theorem}[section]
\newtheorem*{thm*}{Theorem}
\theoremstyle{plain}
\newtheorem{lem}[thm]{Lemma}
\newtheorem{pro}{Proposition}[section]
\newtheorem{cor}[thm]{Corollary}
\theoremstyle{definition}
\newtheorem{remark}{Remark}[section]
\newcommand{\mN}{\mathbb{N}}
\newcommand{\mS}{\mathbb{S}}
\newcommand{\mR}{\mathbb{R}}
\newcommand{\ro}{\mathbb{R}}
\newcommand{\mcal}[1]{\mathcal{#1}}
\newcommand{\md}[1]{\left|#1 \right|}
\newcommand{\nrm}[1]{\left\| #1 \right\|}
\newcommand{\bct}[1]{\left(#1\right)}
\newcommand{\Sb}[1]{\left \lbrace #1\right\rbrace}
\newcommand{\hu}{\hat u}
\newcommand{\C}{{\mathcal A}}
\newcommand{\rr}{\tau}
\newcommand{\R}{\mathbb{R}}
\numberwithin{equation}{section} \allowdisplaybreaks
        \title[The Caffarelli-Kohn-Nirenberg  inequalities for radial functions]{The Caffarelli-Kohn-Nirenberg  inequalities for radial functions}
        \author[A. Mallick]{Arka Mallick}
        \address[A. Mallick]{Department of Mathematics \newline\indent
	IISc, Bengaluru,India}
\email{arkamallick@iisc.ac.in}
        \author[H.-M. Nguyen]{Hoai-Minh Nguyen}
  \address[H.-M. Nguyen]{Laboratoire Jacques Louis Lions, \newline\indent
 Sorbonne Universit\'e\newline\indent
	Paris, France}
\email{hoai-minh.nguyen@sorbonne-universite.fr}
\begin{document}

\begin{abstract}
We establish the {\it full range}  of the Caffarelli-Kohn-Nirenberg inequalities for radial functions in the Sobolev and the fractional Sobolev spaces  of order $0 < s \le 1$. In particular, we show that the range of the parameters for radial functions is strictly larger than the one without symmetric assumption. Previous known results reveal 
only  some special ranges of parameters even in the case $s=1$. Our proof is new and can be easily adapted to other contexts.  Applications on compact embeddings are also mentioned. 
\end{abstract}

\maketitle
\tableofcontents

\noindent {\bf MSC2010}: {26D10, 26A54}\\
\noindent {\bf Keywords}: {Caffarelli-Kohn-Nirenberg inequality, radial functions, compact embedding}.

\section{Introduction}
Let $d\geq 1$, $p\geq 1$, $q\geq 1$, $\tau \ge 1$, $0 <  a\leq 1$, $\alpha,\beta, \gamma \in \ro$ be such that 
\begin{equation}\label{positive-1}
\frac{1}{\tau}+\frac{\gamma}{d}, \ \frac{1}{p}+\frac{\alpha}{d}, \ \frac{1}{q}+\frac{\beta}{d}>0, 
\end{equation}  
and  the following balance law  holds 
\begin{equation}\label{balance-law1}
\frac{1}{\tau}+\frac{\gamma}{d}= a\bct{ \frac{1}{p}+\frac{\alpha-1}{d}}+ (1-a)\bct{ \frac{1}{q}+\frac{\beta}{d}}. 
\end{equation}
Define $\sigma$ by 
\begin{equation}\label{def-alpha-gamma}
\gamma = a\sigma+(1-a)\beta.
\end{equation}
Assume that 
\begin{equation}\label{coucou1}
0\leq \alpha-\sigma
\end{equation}
and 
\begin{equation}\label{coucou2}
\alpha-\sigma\leq 1 \text{ if } \frac{1}{\tau}+\frac{\gamma}{d}= \frac{1}{p}+\frac{\alpha-1}{d}.
\end{equation}

Caffarelli, Kohn,   and Nirenberg \cite{CKN} (see also \cite{CKN1}) established the following famous Caffarelli, Kohn  and Nirenberg (CKN) inequalities, for $u \in C_c^1(\ro^d)$,   
\begin{equation}\label{CKN-original}
\nrm{|x|^\tau u}_{L^\tau(\ro^d)} \leq C\nrm{|x|^\alpha \nabla u}^a_{L^p(\ro^d)} \nrm{|x|^\beta u}^{1-a}_{L^q(\ro^d)}, 
\end{equation}
for some positive constant $C$ independent of $u$. Quite recently, the full range of the CKN inequalities has been derived   by Nguyen and Squassina \cite{Ng-Squa1} for the fractional Sobolev spaces $W^{s, p}(\mR^d)$ with $0 < s < 1$ and $p > 1$. More precisely,   let $d\geq 1$, $0<s<1$, $p>1$, $q\geq 1$, $\tau \ge 1$, $0 <  a \leq 1$,  and $\alpha_1, \alpha_2,\beta, \gamma \in \ro$. Set $\alpha = \alpha_1+\alpha_2 $ and define $\sigma$ by  \eqref{def-alpha-gamma}. Assume that 
\begin{align}\label{balance-law2}
\frac{1}{\tau}+\frac{\gamma}{d}= a\bct{\frac{1}{p}+ \frac{\alpha-s}{d}}+(1-a)\bct{\frac{1}{q}+\frac{\beta}{d}},
\end{align}
and the following conditions hold 
\begin{equation}\label{cond-s-1}
0 \leq \alpha-\sigma
\end{equation}
and 
\begin{equation}\label{cond-s-2}
\alpha-\sigma\leq s \text{ if } \frac{1}{\tau}+\frac{\gamma}{d}= \frac{1}{p}+\frac{\alpha-s}{d}.
\end{equation}
Nguyen and Squassina \cite[Theorem 1.1]{Ng-Squa1} proved, for some positive constant $C$,  
\begin{itemize}
\item[$i)$] if $\frac{1}{\tau}+\frac{\gamma}{d}>0$, then for all $u \in C^1_c(\R^d)$, it holds 
\begin{align}\label{CKN-fractional1}
\nrm{|x|^\gamma u }_{L^\tau(\ro^d)} \leq C \bct{\int_{\ro^d}\int_{\ro^d} \frac{|u(x)-u(y)|^p|x|^{\alpha_1p}|y|^{\alpha_2p}}{|x-y|^{d+sp}}dx dy }^\frac{a}{p}\nrm{|x|^\beta u}^{1-a}_{L^q(\ro^d)},
\end{align}

\item[$ii)$]  if $\frac{1}{\tau}+\frac{\gamma}{d} < 0$, then for all $u \in C^1_c(\R^d \setminus \{0 \})$, it holds 
\begin{align}\label{CKN-fractional2}
\nrm{|x|^\gamma u }_{L^\tau(\ro^d)} \leq C \bct{\int_{\ro^d}\int_{\ro^d} \frac{|u(x)-u(y)|^p|x|^{\alpha_1p}|y|^{\alpha_2p}}{|x-y|^{d+sp}}dx dy }^\frac{a}{p}\nrm{|x|^\beta u}^{1-a}_{L^q(\ro^d)},
\end{align}
\end{itemize}
In the case $\frac{1}{\tau}+\frac{\gamma}{d}  = 0$, a $\log$-correction is required,  and the conditions \eqref{cond-s-1} and \eqref{cond-s-2} are replaced by 
\begin{equation}\label{cond-s-3}
0 \le \alpha - \sigma \le s. 
\end{equation}
Denote $B_R$ the open ball centered at the origin with radius $R$. Assume {\it additionally}  that  $\tau > 1$.   Nguyen and Squassina \cite[Theorem 3.1]{Ng-Squa1} showed that there exists a positive constant $C$ such that for all $u\in C_c^1(\ro^d)$ and for all $R_1, R_2 > 0$,  we have  
\begin{itemize}
\item[(i)] if $\frac{1}{\tau}+\frac{\gamma}{d}=0$ and $\mathrm{supp}\ u \subset B_{R_2}$, then 
\begin{multline}\label{CKN-limiting-1}
\bct{\int_{\ro^d} \frac{|x|^{\gamma \tau}}{\ln^{\tau} (2R_2/|x|)}|u|^{\tau}dx}^\frac{1}{\tau} \\[6pt]\leq C\bct{\int_{\ro^d}\int_{\ro^d} \frac{|u(x)-u(y)|^p|x|^{\alpha_1p}|y|^{\alpha_2p}}{|x-y|^{d+sp}}dx dy }^\frac{a}{p}  \nrm{|x|^\beta u}^{1-a}_{L^q(\ro^d)}, 
\end{multline}
 \item[(ii)] if $\frac{1}{\tau}+\frac{\gamma}{d}=0$, and $\mathrm{supp} \ u \cap B_{R_1}= \emptyset$, then 
 \begin{multline}\label{CKN-limiting-2}
\bct{\int_{\ro^d} \frac{|x|^{\gamma \tau}}{\ln^{\tau} (2|x|/R_1)}|u|^{\tau}dx}^\frac{1}{\tau} \\[6pt]\leq C \bct{\int_{\ro^d}\int_{\ro^d} \frac{|u(x)-u(y)|^p|x|^{\alpha_1p}|y|^{\alpha_2p}}{|x-y|^{d+sp}}dx dy }^\frac{a}{p} \nrm{|x|^\beta u}^{1-a}_{L^q(\ro^d)}. 
\end{multline}
\end{itemize}

Note that the conditions $\frac{1}{p}+\frac{\alpha}{d}, \ \frac{1}{q}+\frac{\beta}{d}>0$ are not required in these inequalities. In the case $a=1$ and $1/ \tau + \gamma / d > 0$, several special ranges of parameters were previously derived in \cite{FS08, MS02, AB17}. These works are partially motivated from new characterizations of Sobolev spaces using non-local, convex functionals proposed by Bourgain, Brezis, and Mironescu \cite{BBMAnother} (see also \cite{BrHow}). Related characterizations of Sobolev spaces with non-local, non-convex functionals can be found in \cite{NgSob1, Bour-Ng-Sobolev, BrNg18} and the references therein. The proof given in \cite{Ng-Squa1} (see also \cite{Ng-Squa2}) is new. It is based on the dyadic decomposition of the real space, Gagliardo-Nirenberg's inequalities for annulus,  and a trick on  summation processes to bring the information from a family of  annulus to the whole space. Combining these ideas with the techniques in \cite{Ng-Ineq}, which are used to prove new Sobolev's inequalities, we established the full range of Coulomb-Sobolev inequalities \cite{Ng-Mallick2}. In the case $s=1$,  inequality~\eqref{CKN-original} also holds in the case $1/ \tau  + \gamma/d < 0$,  and similar results as in \eqref{CKN-limiting-1} and \eqref{CKN-limiting-2} are valid in the case $1/ \tau  + \gamma/d = 0$. We present these results in \Cref{sect-s=1} (see \Cref{thm3} and \Cref{thm4}).

\medskip 
In this paper, we investigate the CKN inequalities for radial functions. We show that  the previous results also hold for some negative range of $\alpha - \sigma$ (compare with \eqref{coucou1} and \eqref{cond-s-1}). 
The  fact that the range of the parameters of  a family of inequalities can be larger when a symmetry condition is imposed is a well-known phenomenon, e.g.,  in the context of Stein-Weis inequalities \cite{Rubin83,DDD12} and  Coulomb-Sobolev inequalities \cite{BGO, BGMMV}. 
Various compactness results can be established using the extended range and are useful in the proof of the existence of minimizers of variational problems. Also, these compactness results play important roles in the analysis of various interesting physical phenomena, see,  e.g.,  \cite{Strauss77, BL1, Lieb77, Lions82}, and the references therein. It is quite surprising that  very few results have been known for the extended range of the CKN inequalities for radial functions. The goal of this paper is to completely fill this gap  for $0 <  s \le 1$.

\medskip 
We first concentrate on the setting of the fractional Sobolev spaces.  The following notation is used.   For $p  >  1$, $0 < s < 1$, $\alpha\in \mR$, $\Lambda > 1$, open $\Omega \subset \R^d$, and a measurable function $g$ defined in $\Omega$,  we set 
\begin{equation} \label{def-dot-Wsp}
\| g \|_{\dot W^{s, p, \alpha, \Lambda}(\Omega)}^p =\int_{\Omega} \int_{\Omega} \frac{|g(x) - g(y)|^p |x|^{\alpha p}}{|x - y|^{d + sp }} \chi_{\Lambda} (|x|, |y|) \, dx \, dy, 
\end{equation}
where, for $r_1, r_2  \ge 0$,  we denote
\begin{equation}\label{def-chi}
\chi_{\Lambda} (r_1, r_2) = \left\{  \begin{array}{cl} 1  &  \mbox{ for  } \Lambda^{-1} r_1 \le r_2 \le \Lambda r_1, \\[6pt]
0 &  \mbox{ otherwise}. 
\end{array}\right. 
\end{equation}
The dot in the LHS of \eqref{def-dot-Wsp} means that only the information of the ``semi-norm" is considered.

\medskip 
Our first main result is the following one dealing with the case where $1/ \tau + \gamma/ d \neq 0$.

\begin{thm}\label{thm1-rad}
Let $d\geq 2$, $0<s < 1$, $p>1$, $q\geq 1$, $\tau \ge 1$, $0 <  a\leq 1$, $\alpha, \, \beta, \,  \gamma \in \ro$, and $\Lambda > 1$. Define  $\sigma$ by \eqref{def-alpha-gamma}.  Assume \eqref{balance-law2} and
\begin{equation}\label{cond-s-1-*}
- (d-1) s  \leq \alpha-\sigma < 0. 
\end{equation}
We have, for some positive constant $C$, 
\begin{itemize}
\item[$i)$] if $\frac{1}{\tau}+\frac{\gamma}{d}>0$, then for all radial $u\in L^1_{loc}(\ro^d \setminus \{0 \})$ with compact support in $\mR^d$, it holds 
\begin{equation}\label{thm1-rad-1}
\nrm{|x|^\gamma u }_{L^\tau(\ro^d)}\leq C \nrm{u}^a_{\dot{W}^{s,p,\alpha, \Lambda}(\ro^d)}\nrm{|x|^\beta u}^{1-a}_{L^q(\ro^d)},
\end{equation}

\item[$ii)$] if $\frac{1}{\tau}+\frac{\gamma}{d}<0$, then for all radial $u\in L_{loc}^1(\ro^d \setminus \{0 \})$ which is 0 in a neighborhood of $0$, it holds 
\begin{equation}\label{thm1-rad-2}
\nrm{|x|^\gamma u }_{L^\tau(\ro^d)}\leq C \nrm{u}^a_{\dot{W}^{s,p,\alpha, \Lambda}(\ro^d)}\nrm{|x|^\beta u}^{1-a}_{L^q(\ro^d)}. 
\end{equation}
  
\end{itemize}
\end{thm}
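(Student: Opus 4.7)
The plan is to reduce the $d$-dimensional radial inequality to a $1$-dimensional CKN inequality via spherical integration; the key algebraic observation is that the extended range $-(d-1)s\le \alpha-\sigma<0$ in dimension $d$ corresponds, after a shift of parameters by $(d-1)$ times the natural Sobolev exponents, to the standard non-negative range $0\le\tilde\alpha-\tilde\sigma<s$ in dimension one, which is already covered by the Nguyen--Squassina theorem recalled above.

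First I would pass to the radial variable $r=|x|$. For $u(x)=v(|x|)$, polar coordinates give
\[
\||x|^\gamma u\|_{L^\tau(\mathbb{R}^d)}^\tau=|\mathbb{S}^{d-1}|\!\int_0^\infty\!\! r^{\gamma\tau+d-1}|v|^\tau\,dr,\qquad \||x|^\beta u\|_{L^q(\mathbb{R}^d)}^q=|\mathbb{S}^{d-1}|\!\int_0^\infty\!\! r^{\beta q+d-1}|v|^q\,dr.
\]
For the semi-norm, writing $x=r\omega$, $y=\rho\omega'$ and integrating out the angular variables, on the support of $\chi_\Lambda(|x|,|y|)$ one has $r\sim\rho$ and in particular $|r-\rho|\lesssim r$, so the standard asymptotic
\[
\int_{\mathbb{S}^{d-1}}\!\!\int_{\mathbb{S}^{d-1}}\frac{d\omega\,d\omega'}{|r\omega-\rho\omega'|^{d+sp}}\;\sim\;\frac{1}{(r\rho)^{(d-1)/2}\,|r-\rho|^{1+sp}}
\]
(whose residual diagonal singularity is harmless, as the factor $|v(r)-v(\rho)|^p$ vanishes at a compatible rate for $s<1$) yields the equivalence
\[
\|u\|_{\dot W^{s,p,\alpha,\Lambda}(\mathbb{R}^d)}^p\;\sim\;\int_0^\infty\!\!\int_0^\infty\frac{|v(r)-v(\rho)|^p\,r^{\alpha p+d-1}}{|r-\rho|^{1+sp}}\,\chi_\Lambda(r,\rho)\,dr\,d\rho.
\]

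Next, I set $\tilde\alpha:=\alpha+(d-1)/p$, $\tilde\beta:=\beta+(d-1)/q$, $\tilde\gamma:=\gamma+(d-1)/\tau$ and define $\tilde\sigma$ by $\tilde\gamma=a\tilde\sigma+(1-a)\tilde\beta$. A direct computation from \eqref{balance-law2} shows the $1$-dimensional balance law
\[
\frac{1}{\tau}+\tilde\gamma=a\Bigl(\frac{1}{p}+\tilde\alpha-s\Bigr)+(1-a)\Bigl(\frac{1}{q}+\tilde\beta\Bigr)
\]
and the key identity
\[
\tilde\alpha-\tilde\sigma=\frac{(\alpha-\sigma)+(d-1)s}{d}.
\]
Thus assumption \eqref{cond-s-1-*} becomes $0\le\tilde\alpha-\tilde\sigma<(d-1)s/d<s$, placing the shifted data strictly inside the admissible range of Nguyen--Squassina, and moreover the sign of $1/\tau+\tilde\gamma$ coincides with that of $1/\tau+\gamma/d$, so cases~$i)$ and~$ii)$ of the theorem correspond to cases~$i)$ and~$ii)$ of the reduced one-dimensional statement. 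Extending $v$ to $\mathbb{R}$ by even reflection and applying the $1$-dimensional CKN inequality, then substituting back through the equivalences of the previous step, yields \eqref{thm1-rad-1} and \eqref{thm1-rad-2}.

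The main obstacle is that the reduced fractional semi-norm carries the restriction $\chi_\Lambda(r,\rho)=1$, so only nearby radii contribute. One therefore needs a $\chi_\Lambda$-restricted version of the $1$-dimensional CKN inequality, which is a stronger statement (smaller right-hand side) than the one recalled from \cite{Ng-Squa1}. Fortunately, the strategy used in \cite{Ng-Squa1}---dyadic decomposition in $r$, local Gagliardo--Nirenberg inequalities on each dyadic shell, and a summation trick---involves only interactions between adjacent dyadic shells, and hence extends to the $\chi_\Lambda$-restricted semi-norm for every $\Lambda>1$ (through a simple covering/iteration argument when $\Lambda$ is close to~$1$). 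Carrying out this adaptation carefully, together with the angular-integration asymptotic on the borderline regime $|r-\rho|\sim r$, constitutes the main technical effort of the proof.
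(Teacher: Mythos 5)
Your proposal is correct and follows essentially the same route as the paper: reduction by polar coordinates with the parameter shift $\tilde\alpha=\alpha+\tfrac{d-1}{p}$, $\tilde\beta=\beta+\tfrac{d-1}{q}$, $\tilde\gamma=\gamma+\tfrac{d-1}{\tau}$, the identity $\tilde\alpha-\tilde\sigma=\bigl((\alpha-\sigma)+(d-1)s\bigr)/d$ explaining the extended range, and a $\chi_\Lambda$-restricted one-dimensional CKN inequality proved by rerunning the Nguyen--Squassina dyadic argument, which is exactly the paper's Theorem~\ref{thm1-CKNsw} combined with Lemma~\ref{lem-rad1}. The only cosmetic differences are that the paper needs (and proves) only the lower bound on the angular integral rather than the two-sided asymptotic, and handles small $\Lambda$ by taking shells of ratio $\lambda$ with $\lambda^2=\Lambda$ rather than your covering argument.
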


\begin{remark} \label{rem-convention} \rm In \eqref{thm1-rad-1} and \eqref{thm1-rad-2}, the following convention is used: $+\infty . 0 = 0 . (+ \infty) = 0 $, $(+\infty)^0 = 1$ (this corresponds to the case $a=1$), and $+ \infty \le + \infty$. 
\end{remark}

\begin{remark}\label{rem-necessity}
The condition $\alpha-\sigma\geq -(d-1)s$ is in fact optimal, see \Cref{optimality}. 
\end{remark}

\begin{remark} \rm
Combing \eqref{CKN-fractional1}, \eqref{CKN-fractional2},  and \Cref{thm1-rad} yields that, in the radial case, \eqref{CKN-fractional1} and \eqref{CKN-fractional2} hold if one replaces \eqref{cond-s-1} and \eqref{cond-s-2} by the condition $- (d-1) s  \leq \alpha-\sigma$ and \eqref{cond-s-2}. 

\end{remark}

\medskip 
Concerning the limiting case $1/ \tau +  \gamma/ d = 0 $, we obtain the following result. 
 
\begin{thm}\label{thm2-rad}
Let $d\geq 2$, $0< s < 1$,  $p>1$, $q\geq 1$, $\tau \ge  1$, $0< a\leq 1$, $\alpha, \, \beta, \,  \gamma \in \ro$, $\mu >  1$, and $\Lambda > 1$. Assume that $\tau \le \mu$. Define $\sigma$ by \eqref{def-alpha-gamma}. Assume \eqref{balance-law2} and \eqref{cond-s-1-*}. 
There exists a positive constant $C$ such that for all  radial $u\in L^1_{loc}(\ro^d \setminus \{0 \})$ and for all $R_1, R_2 > 0$,  we have 
\begin{itemize}
\item[$i)$] if $\frac{1}{\tau}+\frac{\gamma}{d}=0$ and $\mathrm{supp}\ u \subset B_{R_2}$, then it  holds; 
\begin{equation}\label{thm2-rad-1}
\bct{\int_{\ro^d} \frac{|x|^{\gamma \tau}}{\ln^{\mu} (2R_2/|x|)}|u|^{\tau}dx}^\frac{1}{\tau}\leq C \nrm{u}^a_{\dot{W}^{s,p,\alpha, \Lambda}(\ro^d)}\nrm{|x|^\beta u}^{1-a}_{L^q(\ro^d)}, 
\end{equation}

 \item[$ii)$] if $\frac{1}{\tau}+\frac{\gamma}{d}=0$, and $\mathrm{supp} \ u \cap B_{R_1}= \emptyset$, then it holds
 \begin{equation}\label{thm2-rad-2}
\bct{\int_{\ro^d} \frac{|x|^{\gamma \tau}}{\ln^{\mu} (2|x|/ R_1)}|u|^{\tau}dx}^\frac{1}{\tau}\leq C \nrm{u}^a_{\dot{W}^{s,p,\alpha, \Lambda}(\ro^d)}\nrm{|x|^\beta u}^{1-a}_{L^q(\ro^d)}. 
\end{equation}
 
\end{itemize}
\end{thm}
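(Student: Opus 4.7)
I treat case $(i)$; case $(ii)$ is obtained by the same argument after reversing the direction of the dyadic index. The plan is to reduce to the non-limiting \Cref{thm1-rad} via a dyadic decomposition in $|x|$, in the spirit of the passage from \eqref{CKN-fractional1} to \eqref{CKN-limiting-1} for the non-radial theory of \cite{Ng-Squa1}.

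Fix $R_2 > 0$ and, without loss of generality, take $R_2 = 2^{J+1}$ for some $J \in \zm$ (a general $R_2$ is absorbed into the constant by monotonicity). Since $\mathrm{supp}\, u \subset B_{R_2}$, decompose $\mathrm{supp}\, u \subset \bigcup_{k \le J} A_k$ where $A_k = \{x \in \ro^d : 2^k \le |x| < 2^{k+1}\}$. For $x \in A_k$ one has $\ln(2R_2/|x|) \asymp J + 2 - k$, so
\begin{equation*}
\int_{\ro^d} \frac{|x|^{\gamma\tau}|u|^\tau}{\ln^\mu(2R_2/|x|)}\, dx \;\le\; C \sum_{k \le J} (J + 2 - k)^{-\mu} \int_{A_k} |x|^{\gamma \tau} |u|^\tau\, dx.
\end{equation*}

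On each annulus, I would apply \Cref{thm1-rad} at a slightly perturbed pair $(\tilde\gamma, \tilde\tau)$ with $1/\tilde\tau + \tilde\gamma/d \ne 0$ (legitimate because \eqref{cond-s-1-*} is an open condition). The rescaling $x \mapsto 2^{-k} x$ sends $A_k$ to the fixed reference annulus $\{1 \le |x| < 2\}$, and since both the balance law \eqref{balance-law2} and the limiting identity $1/\tau + \gamma/d = 0$ are scale-invariant, the resulting local inequality comes with a constant uniform in $k$:
\begin{equation*}
\Bigl(\int_{A_k} |x|^{\gamma\tau} |u|^\tau\, dx\Bigr)^{1/\tau} \;\le\; C\, \|u\|^{a}_{\dot W^{s,p,\alpha,\Lambda}(A_k^*)}\, \||x|^\beta u\|^{1-a}_{L^q(A_k^*)},
\end{equation*}
with $A_k^* = \{x \in \ro^d : 2^{k-K} \le |x| < 2^{k+1+K}\}$ and $K = K(\Lambda)$ chosen so that $\chi_\Lambda(|x|, |y|) = 0$ whenever $x \in A_k$ and $y \notin A_k^*$. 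Combining the two displays, raising to the $\tau$-th power, and crudely bounding each annular seminorm and $L^q$-norm by its global counterpart reduces matters to the elementary summability $\sum_{k \le J} (J+2-k)^{-\mu} \le \sum_{m \ge 2} m^{-\mu} < \infty$, valid because $\mu > 1$. Taking $\tau$-th roots yields \eqref{thm2-rad-1}. The hypothesis $\tau \le \mu$ permits a sharper summation via an $\ell^\tau$--$\ell^\mu$ Hölder inequality which would recover the optimal constant, but is not needed for the qualitative statement.

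The main obstacle is ensuring the uniformity in $k$ of the local CKN constant in step~2. The non-limiting constant on the whole space typically degenerates as $1/\tilde\tau + \tilde\gamma/d \to 0$; on the bounded reference annulus $\{1 \le |x| < 2\}$, however, the balance law is non-degenerate, the weights $|x|^{\bullet}$ are bounded above and below, and the strict inequality $\alpha - \sigma < 0$ in \eqref{cond-s-1-*} keeps one safely away from the second boundary \eqref{cond-s-2}, so the reference-annulus constant remains finite and the scaling argument transports it to all $A_k$ uniformly. An alternative route that avoids this limiting procedure is to rerun the radial dyadic/Gagliardo--Nirenberg machinery of \cite{Ng-Squa1} directly on each $A_k$, which yields the required annular estimate in a self-contained way.
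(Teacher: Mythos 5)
Your step~3 is where the argument breaks down: the annular inequality
$\bigl(\int_{A_k} |x|^{\gamma\tau}|u|^\tau\,dx\bigr)^{1/\tau}\le C\,\|u\|^{a}_{\dot W^{s,p,\alpha,\Lambda}(A_k^*)}\,\||x|^\beta u\|^{1-a}_{L^q(A_k^*)}$
is false as stated. Take $u$ radial with $u\equiv 1$ on the bounded annulus $A_k^*$: the truncated seminorm over $A_k^*$ vanishes, so the right-hand side is $0$ (for every $0<a\le 1$), while the left-hand side is positive. This is not an artifact that a perturbation of $(\gamma,\tau)$ can cure; it reflects the fact that a Gagliardo--Nirenberg estimate on an annulus only controls the oscillation $u-\fint_{A_k}u$, as in \eqref{thm1-p1}, so any correct local estimate necessarily carries the extra term $2^{(\gamma\tau+d)k}\bigl|\fint_{A_k}u\bigr|^\tau$ of \eqref{sameAnnEst}. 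In the limiting case $\gamma\tau+d=0$ these average terms have no geometric decay, and controlling $\sum_{k\le J}(J+2-k)^{-\mu}\bigl|\fint_{A_k}u\bigr|^\tau$ is precisely the heart of the proof: it requires the near-telescoping recursion with the weights $(n-k+1)^{-\nu}$ and \Cref{lem-Holder}, and it is exactly here that the hypothesis $\tau\le\mu$ enters. Your proposal omits this entirely (and even suggests $\tau\le\mu$ is dispensable, which is a symptom of the annular estimate being too strong). The perturbation/rescaling device also does not help: once $1/\tilde\tau+\tilde\gamma/d\neq 0$ the dilation $x\mapsto 2^{-k}x$ produces factors $2^{k(\,\cdot\,)}$ that are not uniform in $k$, and in any case \Cref{thm1-rad} is a global inequality, not a local one, so applying it ``on $A_k$'' would require a cutoff, reintroducing the same constant-function obstruction.

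For comparison, the paper does not argue annulus by annulus in $\ro^d$ at all: it first proves the improved limiting inequality \Cref{thm2-CKNsw} (where the average terms are handled by the recursion described above), and then deduces \Cref{thm2-rad} by passing to polar coordinates, extending $\hu(r)=u(r\sigma)$ evenly to $\ro$, and invoking the one-dimensional case of \Cref{thm2-CKNsw} with the shifted exponents $\alpha'=\alpha+\frac{d-1}{p}$, $\beta'=\beta+\frac{d-1}{q}$, $\gamma'=\gamma+\frac{d-1}{\tau}$; the comparison of seminorms is supplied by \Cref{lem-rad1}, and the computation $\alpha'-\sigma'=\frac{\alpha-\sigma}{d}+\frac{s(d-1)}{d}$ converts the extended range \eqref{cond-s-1-*} into the admissible one-dimensional range $0\le\alpha'-\sigma'\le s$. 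To repair your proof you would either have to carry the average terms through your dyadic decomposition and reproduce that recursion, or follow the paper's reduction to dimension one.
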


\begin{remark}  \rm The convention in \Cref{rem-convention} is also used in \Cref{thm2-rad}.  
\end{remark}

\begin{remark} \label{log-term-necessity}\rm
If $1/q + \beta / d > 0$, by considering a smooth function $u$ which is 1 in a neighborhood of $0$ we can establish the necessity of the $\log$-term in $i)$ of \Cref{thm2-rad} . Similarly, if $1/q + \beta / d < 0$, by considering a smooth function $u$ which is 1 outside $B_R$ for some large $R$ the necessity of the $\log$-term in $ii)$ can be established. 
\end{remark}

\begin{remark} \rm
Combing \eqref{CKN-limiting-1}, \eqref{CKN-limiting-2},  and \Cref{thm2-rad} yields that, in the radial case, \eqref{CKN-limiting-1}, \eqref{CKN-limiting-2} hold if one replaces \eqref{cond-s-3}  by the condition $- (d-1) s  \leq \alpha-\sigma \le s$. 

\end{remark}


There are very few results known for the extended range of  the CKN inequalities in the fractional Sobolev spaces for radial functions (the case $s=1$ will be discussed in the last paragraph of \Cref{sect-s=1}). It was shown by Rubin \cite{Rubin83} (see also \cite[Theorem 4.3]{BGMMV}) that \eqref{CKN-fractional1} holds under the assumption  \eqref{cond-s-1-*} and  $1/ \tau + \gamma/ d > 0$ in  the case where $a=1$, $\tau \ge p = 2$, and $\alpha =0$. The same result was proved in \cite[Theorem 1.2]{DDD12}. These proofs are based on  inequalities for fractional integrations.  Our proof is different and quite elementary. It is  based on an improvement of the fractional CKN inequalities in one dimensional case and a simple use of polar coordinates. This strategy can be easily extended to other contexts. The  improvement was implicitly appeared in \cite{Ng-Squa1} and will be described briefly later. 
The same idea can be applied to the case $s=1$ and will be presented in \Cref{sect-s=1}. Applications to the compact embedding will be given in \Cref{sect-compact}. In particular, we derive the compact embedding of $W^{s, p}(\mR^d)$ into $L^q(\mR^d)$ 
for radial functions if $p  <  q < \frac{dp}{d- sp}$ for $0< s \le 1$ and $sp < d$. This result was previously obtained via various technique such as Strauss' lemma, Riesz-potential, fractional integration, Rubin's lemma, atomic decomposition, etc.

\medskip 
The paper is organized as follows. The improvement of the fractional CKN inequalities are given in \Cref{sect-improvement}. The proofs of \Cref{thm1-rad} and \Cref{thm2-rad} are given in \Cref{sect-rad}. The results in the case $s=1$ are given in \Cref{sect-s=1}. \Cref{sect-compact} is devoted to  the compactness results.

\section{Improvements of the fractional Caffarelli-Kohn-Nirenberg inequalities}\label{sect-improvement}

In this section, we will establish slightly more general versions of the fractional CKN inequalities. These improvements appear very naturally in the proof of  \Cref{thm1-rad} and \Cref{thm2-rad} when polar coordinates are used. 

\medskip 
We begin with an improvement of \eqref{CKN-fractional1} and \eqref{CKN-fractional2}.

\begin{thm}\label{thm1-CKNsw}
Let  $d\geq 1$, $0< s <  1$, $p>1$, $q\geq 1$, $\tau \ge 1$, $0 < a \leq 1$, $\alpha$, $\beta$, $\gamma \in \ro$, and $\Lambda > 1$. Define $\sigma$ by \eqref{def-alpha-gamma}.  Assume \eqref{balance-law2}, \eqref{cond-s-1}, and \eqref{cond-s-2}. There exists a positive constant $C$ such that
\begin{itemize}
\item[$i)$] if $\frac{1}{\tau}+\frac{\gamma}{d}>0$, then for all $u\in L^1_{loc}(\ro^d \setminus \{0\})$ with compact support in $\mR^d$,  it holds 
\begin{equation}
\nrm{|x|^\gamma u }_{L^\tau(\ro^d)}\leq C \nrm{u}^a_{\dot{W}^{s,p,\alpha, \Lambda}(\ro^d)}\nrm{|x|^\beta u}^{1-a}_{L^q(\ro^d)},
\end{equation}
 \item[$ii)$] if $\frac{1}{\tau}+\frac{\gamma}{d}<0$,  then for all $u\in L^1_{loc}(\ro^d \setminus \{0 \})$ which is 0 in a neighborhood of 0,  it holds 
\begin{equation}
\nrm{|x|^\gamma u }_{L^\tau(\ro^d)}\leq C \nrm{u}^a_{\dot{W}^{s,p,\alpha, \Lambda}(\ro^d)}\nrm{|x|^\beta u}^{1-a}_{L^q(\ro^d)}. 
\end{equation}
\end{itemize}
\end{thm}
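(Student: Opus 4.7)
The plan is to refine the dyadic argument of Nguyen and Squassina so as to track which pairs $(x,y)$ actually enter the right-hand side, and then to observe that all such pairs lie in the $\chi_{\Lambda}$-region. Fix $\rho>1$ small enough that $\rho^{4}\le\Lambda$, and decompose $\mathbb{R}^d\setminus\{0\}=\bigcup_{k\in\mathbb{Z}}A_k$ with $A_k=\{\rho^k\le|x|<\rho^{k+1}\}$. Whenever $x\in A_j$ and $y\in A_k$ with $|j-k|\le 3$ we have $|x|/|y|\in[\rho^{-4},\rho^{4}]\subset[\Lambda^{-1},\Lambda]$, so $\chi_{\Lambda}(|x|,|y|)=1$ on such pairs.

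The first step is a localized fractional CKN on each shell. Setting $A_k^{*}:=A_{k-1}\cup A_k\cup A_{k+1}$, I claim that
\[
a_k:=\bigl\||x|^{\gamma} u\bigr\|_{L^{\tau}(A_k)}\;\le\;C\,b_k^{a}\,c_k^{1-a},
\]
where
\[
b_k^{p}:=\int_{A_k^{*}}\!\int_{A_k^{*}}\frac{|u(x)-u(y)|^{p}\,|x|^{\alpha p}}{|x-y|^{d+sp}}\,dx\,dy,\qquad c_k:=\bigl\||x|^{\beta} u\bigr\|_{L^{q}(A_k)},
\]
with $C$ independent of $k$. This follows from the standard (unweighted) fractional Gagliardo--Nirenberg inequality on the fixed annulus $\{1\le|x|\le\rho^{2}\}$, on which the weights $|x|^\alpha,|x|^\beta,|x|^\gamma$ are comparable to constants, applied to the rescaling $v(x)=u(\rho^{k}x)$; the balance law \eqref{balance-law2} is precisely the scale-invariance condition that makes the constant $k$-independent. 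By the choice of $\rho$, every pair $(x,y)\in A_k^{*}\times A_k^{*}$ satisfies $\chi_{\Lambda}(|x|,|y|)=1$, so summing the $b_k^{p}$ only sees contributions from the restricted seminorm $\|u\|_{\dot W^{s,p,\alpha,\Lambda}}^{p}$.

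The second step is the summation scheme of \cite{Ng-Squa1}. Writing $\bigl\||x|^{\gamma} u\bigr\|_{L^{\tau}(\mathbb{R}^d)}^{\tau}=\sum_k a_k^{\tau}$, inserting the local bound, and using the balance law \eqref{balance-law2} together with \eqref{cond-s-1}--\eqref{cond-s-2}, a weighted H\"older in the index $k$ (and, at the borderline of \eqref{cond-s-2}, a telescoping/summation-by-parts argument) yields
\[
\sum_k a_k^{\tau}\le C\Bigl(\sum_k b_k^{p}\Bigr)^{a\tau/p}\Bigl(\sum_k c_k^{q}\Bigr)^{(1-a)\tau/q}.
\]
The sign of $1/\tau+\gamma/d$ dictates the direction in which the weighted sums converge: in case $i)$ compact support truncates the tail at $k\to+\infty$ while the strict sign $1/\tau+\gamma/d>0$ supplies a convergent geometric factor at $k\to-\infty$; in case $ii)$ the two roles are exchanged, with the assumption that $u$ vanishes near $0$ taking care of the tail at $k\to-\infty$. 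Since the shells $A_k^{*}$ have bounded overlap, $\sum_k b_k^{p}\le C\|u\|_{\dot W^{s,p,\alpha,\Lambda}}^{p}$ and $\sum_k c_k^{q}\le C\||x|^{\beta}u\|_{L^q(\mathbb{R}^d)}^{q}$, which closes the proof.

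The main obstacle is the limiting regime where equality holds in \eqref{cond-s-2}: there the geometric-series factor degenerates and one must exploit $\alpha-\sigma\le s$ directly to keep the summation in $k$ finite. This delicate case is already handled in \cite{Ng-Squa1}; once the local inequality on $A_k^{*}$ is established and every pair entering $b_k^{p}$ is confirmed to lie in the $\chi_{\Lambda}$-region, that argument transfers without modification to produce the desired improved inequality.
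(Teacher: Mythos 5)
Your reduction to the $\chi_{\Lambda}$-region is correct and is exactly the paper's device: with consecutive dyadic shells and $\Lambda>4$ (or your $\rho^4\le\Lambda$), every pair entering the local seminorms satisfies $\chi_\Lambda(|x|,|y|)=1$, so only the restricted seminorm is ever used. However, the first step of your argument contains a genuine gap: the claimed local inequality
\[
\bigl\||x|^{\gamma}u\bigr\|_{L^{\tau}(A_k)}\;\le\;C\,b_k^{a}\,c_k^{1-a}
\]
is false. Take $u\equiv 1$ on a neighborhood of $A_k^{*}$; then $b_k=0$ while the left-hand side is positive, so no such inequality with only the semi-norm $b_k$ on the right can hold on a bounded shell. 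The correct local statement (the one actually used in \cite{Ng-Squa1} and in the paper) is the Gagliardo--Nirenberg/Poincar\'e inequality for the oscillation $u-\fint_{A_k}u$. This forces one to control, in addition, the mean values $\fint_{A_k}u$, and that is where the entire technical heart of the proof lives: one compares $\fint_{A_k}u$ with $\fint_{A_{k+1}}u$, absorbs the comparison with a contraction factor $c<1$ whose existence is exactly what the strict sign of $\gamma\tau+d$ provides, and telescopes (forwards in case $i)$, backwards in case $ii)$, which is where the support hypotheses enter). In your write-up the sign of $1/\tau+\gamma/d$ and the support assumptions are mentioned but never actually used: if your local inequality were true, Lemma~\ref{lem-Ineq} plus bounded overlap would give the conclusion with no sign condition at all, which already signals that the local inequality is too strong.

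Two further points. First, your ``weighted H\"older in $k$'' (i.e.\ Lemma~\ref{lem-Ineq}) requires $a\tau/p+(1-a)\tau/q\ge 1$, which by the balance law is equivalent to $\alpha-\sigma\le s$; the regime $\alpha-\sigma>s$ with $\frac1\tau+\frac\gamma d\neq\frac1p+\frac{\alpha-s}d$ is not reached by this summation scheme and needs the separate scaling/interpolation argument (note that $\chi_\Lambda$ is scale invariant, so the restricted seminorm still scales correctly). Second, the telescoping is not a device for ``the borderline of \eqref{cond-s-2}'' as you suggest; it is needed throughout the range $0\le\alpha-\sigma\le s$ to dispose of the shell averages. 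Once you replace your local inequality by the mean-oscillation version and add the telescoping estimate for $\sum_k 2^{(\gamma\tau+d)k}\bigl|\fint_{A_k}u\bigr|^{\tau}$, your argument becomes the paper's proof.
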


Concerning an improvement of \eqref{CKN-limiting-1} and \eqref{CKN-limiting-2}, we have  the following result. 

\begin{thm}\label{thm2-CKNsw}
Let $d\geq 1$, $0< s <  1$, $p>1$, $q\geq 1$, $\tau \ge 1$, $0 < a \leq 1$, $\alpha$, $\beta$, $\gamma \in \ro$, $\mu > 1$, and $\Lambda > 1$. Assume that $\tau \le \mu$.  Define $\sigma $ by \eqref{def-alpha-gamma}.    Assume \eqref{balance-law2} and  \eqref{cond-s-3}.  There exists a positive constant $C$ such that for all $u\in L^1_{loc}(\ro^d \setminus \{0 \})$ and for all $R_1, R_2 > 0$,  we have 
\begin{itemize}
\item[(i)] if $\frac{1}{\tau}+\frac{\gamma}{d}=0$ and $\mathrm{supp}\ u \subset B_{R_2}$, then it holds 
\begin{equation}
\bct{\int_{\ro^d} \frac{|x|^{\gamma \tau}}{\ln^{\mu} (2R_2/|x|)}|u|^{\tau}dx}^\frac{1}{\tau}\leq C \nrm{u}^a_{\dot{W}^{s,p,\alpha, \Lambda}(\ro^d)}\nrm{|x|^\beta u}^{1-a}_{L^q(\ro^d)}, 
\end{equation}
 \item[(ii)] if $\frac{1}{\tau}+\frac{\gamma}{d}=0$, and $\mathrm{supp} \ u \cap B_{R_1}= \emptyset$, then it holds 
 \begin{equation}
\bct{\int_{\ro^d} \frac{|x|^{\gamma \tau}}{\ln^{\mu} (2|x|/R_1)}|u|^{\tau}dx}^\frac{1}{\tau}\leq C \nrm{u}^a_{\dot{W}^{s,p,\alpha, \Lambda}(\ro^d)}\nrm{|x|^\beta u}^{1-a}_{L^q(\ro^d)}. 
\end{equation}
\end{itemize}
\end{thm}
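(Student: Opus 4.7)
The plan is to adapt the dyadic-decomposition scheme of \cite{Ng-Squa1} that produced \eqref{CKN-limiting-1}--\eqref{CKN-limiting-2}, with the global Gagliardo semi-norm replaced by the restricted one. The key new observation is that on each dyadic annulus the weight $|x|^{\alpha}$ is essentially constant, and the cut-off $\chi_\Lambda$ accepts all pairs inside a sufficiently thin annulus; thus the restricted semi-norm $\|\cdot\|_{\dot W^{s,p,\alpha,\Lambda}}$ controls the usual weighted fractional semi-norm locally. By the dilation $u \mapsto u(\lambda\, \cdot)$ one may reduce to $R_2 = 1$ in (i) and $R_1 = 1$ in (ii); the two cases are symmetric (the inversion $x\mapsto x/|x|^2$ essentially exchanges them), so I would focus on (i).

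First I would introduce the dyadic annuli $A_k = \{2^k \le |x| < 2^{k+1}\}$ for $k \in \mathbb{Z}$, together with slight thickenings $\widetilde A_k \supset A_k$, chosen thin enough (subdividing $A_k$ into finitely many subannuli if $\Lambda$ is close to $1$) so that all pairs $(x,y) \in \widetilde A_k \times \widetilde A_k$ satisfy $\Lambda^{-1}|x| \le |y| \le \Lambda|x|$. Setting $M_k := \|u\|_{\dot W^{s,p,\alpha,\Lambda}(\widetilde A_k)}$ and $N_k := \||x|^\beta u\|_{L^q(\widetilde A_k)}$, I would apply the standard unweighted fractional Gagliardo--Nirenberg inequality on the unit annulus, rescale to $A_k$, and use the balance law \eqref{balance-law2} together with the hypothesis $1/\tau + \gamma/d = 0$ (which makes all powers of $2^k$ cancel) to obtain a scale-invariant local estimate
\begin{equation*}
\int_{A_k} |x|^{\gamma\tau}|u|^\tau \, dx \le C \, M_k^{a\tau} N_k^{(1-a)\tau}
\end{equation*}
with $C$ independent of $k$.

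On $A_k \cap B_1$ the weight $\ln(2/|x|)$ is comparable to $1+|k|$, so the left-hand side of the inequality in case (i) is bounded by $\sum_{k \le 0}(1+|k|)^{-\mu} M_k^{a\tau} N_k^{(1-a)\tau}$. I would then apply a three-factor H\"older inequality in $k$, with exponents $p/(a\tau)$ on $M_k^{a\tau}$, $q/((1-a)\tau)$ on $N_k^{(1-a)\tau}$, and a remaining exponent $r$ absorbing the logarithmic weight as $\sum_k (1+|k|)^{-\mu r}$. The consistency of these exponents reduces, via \eqref{balance-law2}, to $r \ge 1$; finite overlap of the $\widetilde A_k$ then recovers $\sum_k M_k^p \lesssim \|u\|_{\dot W^{s,p,\alpha,\Lambda}(\mathbb{R}^d)}^p$ and likewise for $N_k$. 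The main obstacle is precisely this last arithmetic: in the critical regime the dyadic pieces admit no geometric decay in $k$, and one must borrow summability from the logarithmic weight; the assumptions $\mu > 1$ and $\tau \le \mu$ are exactly what allow the H\"older exponents to close up, while \eqref{cond-s-3} guarantees the underlying local CKN inequality on each annulus.
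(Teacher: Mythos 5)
There is a genuine gap at the heart of your argument: the ``scale-invariant local estimate'' $\int_{A_k}|x|^{\gamma\tau}|u|^\tau\,dx\le C\,M_k^{a\tau}N_k^{(1-a)\tau}$ is false, because the Gagliardo semi-norm $M_k$ annihilates constants (take $u\equiv 1$ on $A_k$: the left side is positive while the right side vanishes for every $a>0$). The correct local inequality, obtained from Gagliardo--Nirenberg on the annulus, controls $u-\fint_{A_k}u$ and therefore carries an additional term $C\,2^{(\gamma\tau+d)k}\big|\fint_{A_k}u\big|^\tau$, which in the critical case $\gamma\tau+d=0$ has no geometric decay in $k$. Bounding the resulting sum $\sum_{k\le n}(n-k+1)^{-\mu}\big|\fint_{A_k}u\big|^\tau$ is precisely the crux of the theorem, and your proposal contains no mechanism for it. The paper handles it by comparing consecutive averages, $\big|\fint_{A_k}u-\fint_{A_{k+1}}u\big|$, via the same local estimate, and then running a telescoping iteration based on the elementary inequality $(|a|+|b|)^\tau\le c|a|^\tau+C(c-1)^{1-\tau}|b|^\tau$ with the $k$-dependent choice $c=(n-k+1)^\nu/(n-k+1/2)^\nu$, $\nu=\mu-1$; this is exactly where the hypotheses $\mu>1$ (so that $c-1\sim(n-k+1)^{-1}$ and the weights $ (n-k+1)^{-\nu}$ telescope) and $\tau\le\mu$ (so that the accumulated factor $(n-k+1)^{\tau-1-\nu}$ stays bounded) enter. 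You have attached these two hypotheses to the wrong part of the argument.

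A secondary, but also real, problem is the summation step. Under \eqref{balance-law2} and $\alpha-\sigma\le s$ one has $\frac{a\tau}{p}+\frac{(1-a)\tau}{q}\ge 1$, so a three-factor H\"older inequality in $k$ with a nontrivial exponent $r<\infty$ on the logarithmic weight cannot be applied (the reciprocals would sum to more than $1$). The paper instead sums the products $M_k^{a\tau}N_k^{(1-a)\tau}$ using the reverse-type inequality $\sum_i|a_i|^\kappa|b_i|^\eta\le(\sum_i|a_i|)^\kappa(\sum_i|b_i|)^\eta$ valid for $\kappa+\eta\ge1$, bounding the log weight simply by $1$; no summability is borrowed from the logarithm there. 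This part of your argument is repairable, but as written the exponent arithmetic does not close. The reduction of (ii) to (i) by inversion is also not as innocuous as stated (the kernel $|x-y|^{-d-sp}$ picks up the factor $(|x||y|)^{d+sp}$ under $x\mapsto x/|x|^2$); the paper simply reruns the telescoping argument from the inner boundary outward.
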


It is clear that \Cref{thm1-CKNsw} implies \eqref{CKN-fractional1} and \eqref{CKN-fractional2} and \Cref{thm2-CKNsw} yields \eqref{CKN-limiting-1}  and \eqref{CKN-limiting-2}.  \Cref{thm1-CKNsw} and \Cref{thm2-CKNsw} were already implicitly contained in \cite{Ng-Squa1} where \eqref{CKN-fractional1}, \eqref{CKN-fractional2}, \eqref{CKN-limiting-1}, and \eqref{CKN-limiting-2} were established.   
For the convenience of the reader, we will describe briefly the proofs of \Cref{thm1-CKNsw} and \Cref{thm2-CKNsw} in the next two sections respectively.

\subsection{Proof of \Cref{thm1-CKNsw}} The proof is divided into two steps where we prove $i)$ and $ii)$ respectively.

\medskip 
\noindent {\bf Step 1: Proof of $i)$.}  For simplicity of arguments,  we assume that $\Lambda > 4$ from later on \footnote{In the general case, one just needs to define $\mcal{A}_k$ by $\Sb{x\in \ro^d; \; \lambda^k\leq |x| < \lambda^ {k+1}}$ with $\lambda^2 = \Lambda$ instead of \eqref{def-Ak}.}. 

We first consider the case $0 \le \alpha-\sigma \leq s$.  As in \cite{Ng-Squa1}, for $k\in \mathbb{Z}$ set 
\begin{equation}\label{def-Ak}
\mcal{A}_k : = \Sb{x\in \ro^d; \; 2^k\leq |x| <2^ {k+1}}.
\end{equation}
Since $\alpha-\sigma \ge 0$, by Gagliardo-Nirenberg inequality  \cite[Lemma 2.2]{Ng-Squa1} \footnote{\cite[Lemma 2.2]{Ng-Squa1}  states for functions of class $C^1$ up to the boundary but the same result holds for our setting by using the standard convolution technique.}, we derive  that 
\begin{multline}\label{thm1-p1}
\bct{\fint_{\mcal{A}_k} \left|u- \fint_{\mcal{A}_k} u \right|^{\tau}}^{\frac{1}{\tau}} \\[6pt]
 \leq   C \bct{2^{-(d-sp)k}\int_{\mcal{A}_k}\int_{\mcal{A}_k} \frac{|u(x)-u(y)|^p}{|x-y|^{d+sp}}dxdy}^{a/p} \bct{\fint_{\mcal{A}_k}|u(x)|^qdx}^{(1-a)/q}.
\end{multline}
Here and in what follows in the proof of \Cref{thm1-CKNsw}, $C$ denotes a positive constant independent of $u$ and $k$ (and also independent of $m$, and $n$, which appear later), and $\fint_{\Omega}: = \frac{1}{|\Omega|} \int_{\Omega}$.  Since 
$$
2^{ \tau\gamma k} \int_{\mcal{A}_k} |u|^{\tau}  \le C  2^{(\tau \gamma+d)k} \fint_{\mcal{A}_k} \left|u- \fint_{\mcal{A}_k} u\right|^{\tau} + C 2^{(\tau \gamma+d)k}\md{\fint_{\mcal{A}_k}u}^{\tau}, 
$$
using  \eqref{balance-law2}, we derive from \eqref{thm1-p1} that 
 \begin{multline}\label{sameAnnEst}
 \int_{\mcal{A}_k} |u|^{\tau} |x|^{\tau \gamma} \, dx   \leq  C 2^{(\gamma\tau+d)k}\md{\fint_{\mcal{A}_k}u}^{\tau} \\[6pt] + C \bct{\int_{\mcal{A}_k}\int_{\mcal{A}_k} \frac{|u(x)-u(y)|^p |x|^{\alpha p}}{|x-y|^{d+sp}} \, dx \, dy}^\frac{a\tau}{p}   \bct{\int_{\mcal{A}_k} |u(x)|^q |x|^{\beta q}\, dx }^\frac{(1-a)\tau}{q}. 
\end{multline}

Let $m, \, n\in \mathbb{Z}$ be such that $m\leq n-2$ and $\mathrm{supp} \ u \subset B_{2^n}$. Summing \eqref{sameAnnEst} with respect to $k$ from $m$ to $n$,  we get
\begin{multline}\label{thm1-CKNsw-m1}
\int_{\Sb{2^m<|x|<2^{n+1}}} |u|^{\tau} |x|^{\tau\gamma}   \leq  C\sum_{k=m}^n 2^{(\gamma\tau+d)k}\md{\fint_{\mcal{A}_k}u}^{\tau} \\[6pt]
+ C\sum_{k=m}^n \bct{\int_{\mcal{A}_k}\int_{\mcal{A}_k} \frac{|u(x)-u(y)|^p |x|^{\alpha p} }{|x-y|^{d+sp}}\, dx \, dy}^\frac{a\tau}{p} \nrm{|x|^\beta u}^{(1-a)\tau}_{L^q(\mcal{A}_k)}. 
\end{multline}
Applying \Cref{lem-Ineq} below with $\kappa = a \tau / p$ and $\eta = (1-a) \tau /q$ after using the condition $\alpha-\sigma\leq s$ to check that $\kappa + \eta \ge 1$, we derive that
\begin{equation}\label{thm1-CKNsw-m2}
\sum_{k=m}^n \bct{\int_{\mcal{A}_k}\int_{\mcal{A}_k} \frac{|u(x)-u(y)|^p |x|^{\alpha p} }{|x-y|^{d+sp}}\, dx \, dy}^\frac{a\tau}{p} \nrm{|x|^\beta u}^{(1-a)\tau}_{L^q(\mcal{A}_k)} \le \| u\|_{\dot W^{s, p, \alpha, \Lambda} (\mR^d)}^{a\tau} \nrm{|x|^\beta u}^{(1-a)\tau}_{L^q(\ro^d)}. 
\end{equation}
Combining \eqref{thm1-CKNsw-m1} and \eqref{thm1-CKNsw-m2} yields
\begin{equation}\label{thm1-p2}
\int_{\Sb{|x|>2^m}} |u|^{\tau} |x|^{\tau\gamma}     \leq  C\sum_{k=m}^n 2^{(\gamma\tau+d)k}\md{\fint_{\mcal{A}_k}u}^{\tau} + C \| u\|_{\dot W^{s, p, \alpha, \Lambda} (\mR^d)}^{a\tau} \nrm{|x|^\beta u}^{(1-a)\tau}_{L^q(\ro^d)}.  \end{equation}

We next estimate the first term of the RHS of \eqref{thm1-p2}. We have, as in \eqref{thm1-p1},
\begin{multline}\label{thm1-p3}
\md{\fint_{\mcal{A}_k} u-\fint_{\mcal{A}_{k+1}}u}^{\tau} \leq   C  \bct{2^{(d-sp)k}\int_{\mcal{A}_k\cup \mcal{A}_{k+1}}\int_{\mcal{A}_k\cup\mcal{A}_{k+1}} \frac{|u(x)-u(y)|^p}{|x-y|^{d+sp}} \, dx \, dy}^\frac{a\tau}{p} \\[6pt] \times \bct{\fint_{\mcal{A}_k\cup \mcal{A}_{k+1}} |u(x)|^q \, dx }^\frac{(1-a)\tau}{q}. 
\end{multline}
With $c =2/(1+2^{\gamma\tau+d})<1$, since $c 2^{\gamma\tau+d} > 1$  thanks to $\gamma \tau + d > 0$ we derive from \eqref{thm1-p3} that 
\begin{multline}\label{thm1-coucou}
2^{(\gamma\tau+d)k} \md{\fint_{\mcal{A}_k} u}^{\tau} \leq c 2^{(\gamma\tau+d)(k+1)} \md{\fint_{\mcal{A}_{k+1}}u} ^{\tau} \\[6pt]
+C \bct{\int_{\mcal{A}_k\cup\mcal{A}_{k+1}}\int_{\mcal{A}_k\cup\mcal{A}_{k+1}} \frac{|u(x)-u(y)|^p |x|^{\alpha p}}{|x-y|^{d+sp}}dxdy}^\frac{a\tau}{p}   \nrm{|x|^\beta u}^{(1-a)\tau}_{L^q(\mcal{A}_k\cup \mcal{A}_{k+1})}.
\end{multline}
Summing this inequality with respect to $k$ from $m$ to $n$ for large $n$, since $u$ has a compact support in $B_{2^n}$ and $c < 1$  thanks to $\gamma \tau + d > 0$, we derive  that  
\begin{multline}\label{sumAvgEst}
\sum_{k=m}^n 2^{(\gamma\tau+d)k} \md{\fint_{\mcal{A}_k} u}^{\tau} \leq C \sum_{k=m}^n\bct{\int_{\mcal{A}_k\cup\mcal{A}_{k+1}}\int_{\ro^d} \frac{|u(x)-u(y)|^p |x|^{\alpha p}}{|x-y|^{d+sp}} \, dx \, dy}^\frac{a\tau}{p}\\[6pt] \times \nrm{|x|^\beta u}^{(1-a)\tau}_{L^q(\mcal{A}_k\cup \mcal{A}_{k+1})}. 
\end{multline}
Applying \Cref{lem-Ineq} below again and letting $m \to - \infty$ , we obtain 
\begin{equation}\label{thm1-p4}
\sum_{k \in \mathbb{Z}} 2^{(\gamma\tau+d)k} \md{\fint_{\mcal{A}_k} u}^{\tau} \leq 
C \nrm{u}_{\dot W^{s, p, \alpha, \Lambda} (\mR^d)}^{a\tau}\nrm{|x|^\beta u}^{(1-a)\tau}_{L^q(\ro^d)} .  
\end{equation}

Combining \eqref{thm1-p2} and \eqref{thm1-p4} and letting $m \to - \infty$, we obtain $(i)$ of \Cref{thm1-CKNsw}. The proof of $i)$ in the case $0 \le \alpha - \sigma \le s$ is complete. 

The proof of $i)$ in the case $\alpha - \sigma > s$ and $\frac{1}{\tau}+\frac{\gamma}{d} \neq  \frac{1}{p}+\frac{\alpha-s}{d}$ is based on the standard interpolation technique as in \cite{CKN, Ng-Squa1}. One just notes that, for $\lambda > 0$,  
\begin{equation*}
\| u (\lambda \cdot) \|_{\dot W^{s, p, \alpha, \Lambda}(\mR^d)} = \lambda^{s-\alpha - \frac{d}{p}}  \| u  \|_{\dot W^{s, p, \alpha, \Lambda}(\mR^d)} 
\end{equation*}
since $\chi_{\Lambda} (x, y) = \chi_{\Lambda} (\lambda x, \lambda y)$, and 
\begin{equation*}
\| |x|^\gamma u (\lambda \cdot) \|_{L^\tau(\mR^d)} = \lambda^{- \gamma - \frac{d}{\tau}} \| |x|^\gamma u \|_{L^\tau(\mR^d)} \quad \mbox{ and } \quad \| |x|^\beta u (\lambda \cdot) \|_{L^q (\mR^d)} =  \lambda^{- \beta - \frac{d}{q}} \| |x|^\beta u  \|_{L^q (\mR^d)}. 
\end{equation*}
The details are omitted.

\medskip 
\noindent {\bf Step 2: Proof of $ii)$.} The proof of $ii)$ of \Cref{thm1-CKNsw} is similar to that of $i)$.  We only deal with the case $0 \le \alpha - \sigma \le s$ since the proof in the case where $\alpha - \sigma > s$ and $\frac{1}{\tau}+\frac{\gamma}{d} \neq  \frac{1}{p}+\frac{\alpha-s}{d}$ is only by interpolation and almost unchanged.

Assume $0 \le \alpha - \sigma \le s$. Let $m$ be such that $u =0$ in $B_{2^m}$.  Similar to  \eqref{thm1-p2}, we have
\begin{equation}\label{thm1-p2-*}
\int_{\Sb{|x|< 2^n}} |u|^{\tau} |x|^{\tau\gamma}     \leq  C\sum_{k=m}^n 2^{(\gamma\tau+d)k}\md{\fint_{\mcal{A}_k}u}^{\tau} + C \| u\|_{\dot W^{s, p, \alpha, \Lambda} (\mR^d)}^{a\tau} \nrm{|x|^\beta u}^{(1-a)\tau}_{L^q(\ro^d)}.  
\end{equation}

 To estimate the first term in RHS of \eqref{thm1-p2-*}, one just needs to note that, instead of \eqref{thm1-coucou}, we have 
with $c=(1+2^{\gamma\tau+d})/2<1$  thanks to  $\gamma \tau  + d <  0$, 
\begin{multline*}
2^{(\gamma\tau+d)(k+1)} \md{\fint_{\mcal{A}_{k+1}} u}^{\tau}   \leq c 2^{(\gamma\tau+d)k} \md{\fint_{\mcal{A}_{k}}u} ^{\tau} \\[6pt]
+C \bct{\int_{\mcal{A}_k\cup\mcal{A}_{k+1}}\int_{\mcal{A}_k\cup\mcal{A}_{k+1}} \frac{|u(x)-u(y)|^p |x|^{\alpha p} }{|x-y|^{d+sp}}dxdy}^\frac{a\tau}{p}   \nrm{|x|^\beta u}^{(1-a)\tau}_{L^q(\mcal{A}_k\cup \mcal{A}_{k+1})}. \end{multline*}
Summing with respect to $k$, we also obtain \eqref{thm1-p4}. The conclusion now follows from  \eqref{thm1-p4} and \eqref{thm1-p2-*}. \qed

\medskip 

The following simple lemma is used in the proof of \Cref{thm1-CKNsw}. 

\begin{lem}\label{lem-Ineq} For $\kappa, \eta \ge 0$ with $\kappa + \eta \ge 1$, and $k \in \mN$, we have 
\begin{equation}
\sum_{i=1}^k   |a_{i}|^{\kappa} |b_{i}|^{\eta}    \leq  \bct{\sum_{i=1}^k |a_{i}|}^{\kappa} \bct{\sum_{i=1}^k |b_{i}|}^{\eta}  \mbox{ for } a_{i},  \, b_i   \in \mR.
\end{equation}
\end{lem}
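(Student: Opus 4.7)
The inequality is invariant under the replacements $a_i\mapsto |a_i|$, $b_i\mapsto |b_i|$, so I would first reduce to the case $a_i,b_i\ge 0$ and set $A=\sum_{i=1}^k a_i$, $B=\sum_{i=1}^k b_i$, noting the basic bounds $a_i\le A$ and $b_i\le B$. The structural observation is that the statement is just an extension of H\"older's inequality from the critical line $\kappa+\eta=1$ to the half-plane $\kappa+\eta\ge 1$, and the extra mass should be absorbed by the crude termwise bounds $a_i\le A$, $b_i\le B$.

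The plan is to split the exponents. Choose auxiliary $\kappa',\eta'\ge 0$ with
\[
\kappa'+\eta'=1,\qquad \kappa'\le \kappa,\qquad \eta'\le \eta,
\]
and write
\[
a_i^{\kappa}b_i^{\eta}=a_i^{\kappa-\kappa'}b_i^{\eta-\eta'}\cdot a_i^{\kappa'}b_i^{\eta'}\le A^{\kappa-\kappa'}B^{\eta-\eta'}\cdot a_i^{\kappa'}b_i^{\eta'}.
\]
Summing in $i$ and applying H\"older with conjugate exponents $1/\kappa'$ and $1/\eta'$ to $\sum_i a_i^{\kappa'}b_i^{\eta'}$ yields $A^{\kappa'}B^{\eta'}$, and combining gives the claimed $A^{\kappa}B^{\eta}$.

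The only real content is to check that such $\kappa',\eta'$ exist, which is where $\kappa+\eta\ge 1$ enters. If $\kappa\ge 1$, take $(\kappa',\eta')=(1,0)$; if $\eta\ge 1$, take $(0,1)$; otherwise $\kappa,\eta\in(0,1)$ and $\kappa+\eta\ge 1$, so $\eta\ge 1-\kappa\ge 0$ and one may take $\kappa'=\kappa$, $\eta'=1-\kappa$. The degenerate subcases $\kappa'=0$ or $\eta'=0$ reduce the H\"older step to the trivial monotonicity $\sum_i a_i^{\kappa}\le (\sum_i a_i)^{\kappa}$ for $\kappa\ge 1$ (which itself follows from $a_i\le A$ together with $\sum_i a_i=A$). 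I do not expect any genuine obstacle; the only thing to keep an eye on is a clean case split so that the H\"older step is applied only with strictly positive conjugate exponents.
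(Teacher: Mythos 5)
Your proof is correct, and it is the standard argument: the paper states Lemma \ref{lem-Ineq} without proof (calling it a ``simple lemma''), so there is no proof in the paper to compare against. Your reduction to the critical case $\kappa'+\eta'=1$ via the crude bounds $a_i\le A$, $b_i\le B$, followed by H\"older with conjugate exponents $1/\kappa'$, $1/\eta'$ (and the trivial handling of the degenerate cases $\kappa'=0$ or $\eta'=0$), is complete and fills the gap cleanly.
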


\subsection{Proof of \Cref{thm2-CKNsw}}

As in the proof of \Cref{thm1-CKNsw}, we assume that $\Lambda > 4$ for notational ease.  In this proof, we use the notations in the proof of Theorem~\ref{thm1-CKNsw}.  We only prove the first assertion. The second assertion follows similarly as in the spirit of the proof of Theorem~\ref{thm1-CKNsw}.  
Let $n\in \mathbb{N}$ be such that $ 2^{n-1} \le R_2 < 2^{n}$.

Set 
\begin{equation}
\nu = \mu - 1 > 0. 
\end{equation}
Since $\alpha - \sigma \ge 0$, using \eqref{balance-law2},  we also obtain \eqref{sameAnnEst}.  Summing \eqref{sameAnnEst} with respect to $k$ from $m$ to $n$, we obtain 
\begin{multline}\label{CKN-part1-2}
\int_{\{ |x| > 2^{m} \}} \frac{1}{\ln^{1 + \nu} (2 R_2/ |x|)} |x|^{\gamma \rr}|u|^\rr \, dx  \\[6pt]
\le C \sum_{k = m}^n \frac{1}{(n-k+1)^{1 + \nu}} \Big|\fint_{\C_k} u\Big|^\rr   + C \sum_{k = m}^n \bct{\int_{\mcal{A}_k}\int_{\mcal{A}_k} \frac{|u(x)-u(y)|^p |x|^{\alpha p} }{|x-y|^{d+sp}}\, dx \, dy}^\frac{a\tau}{p}
 \| |x|^\beta u   \|_{L^q(\C_k)}^{(1-a)\rr}. 
\end{multline}

As in \eqref{thm1-p3}, we have 
\begin{multline}\label{CKN-part1-2-1}
\left| \fint_{\C_{k}} u - \fint_{\C_{k+1}} u \right|^\tau \le C  \bct{2^{(d-sp)k}\int_{\mcal{A}_k\cup \mcal{A}_{k+1}}\int_{\mcal{A}_k\cup\mcal{A}_{k+1}} \frac{|u(x)-u(y)|^p}{|x-y|^{d+sp}} \, dx \, dy}^\frac{a\tau}{p} \\[6pt] 
\times \bct{\fint_{\mcal{A}_k\cup \mcal{A}_{k+1}} |u(x)|^q \, dx }^\frac{(1-a)\tau}{q} .
\end{multline}

Applying \Cref{lem-Holder} below with $c = (n-k+1)^\nu/ (n-k+1/2)^\nu$, we deduce  that
\begin{equation*}
 \left| \fint_{\C_{k}} u \right|^{\rr} \le \frac{(n - k + 1)^{\nu}}{(n - k + 1/2)^{\nu}}  \left| \fint_{\C_{k+1}} u \right|^{\rr} + 
 C (n - k + 1)^{\rr - 1} \left| \fint_{\C_{k}} u - \fint_{\C_{k+1}} u \right|^\tau, 
\end{equation*}
since, for $\nu > 0$,  
$$
(n-k+1)^\nu/ (n-k+1/2)^\nu -1 \sim \frac{1}{n-k +1}. 
$$
It follows from  \eqref{balance-law2} and \eqref{CKN-part1-2-1} that 
\begin{multline*}
 \left| \fint_{\C_{k}} u \right|^{\rr} \le \frac{(n - k + 1)^{\nu}}{(n - k + 1/2)^{\nu}}  \left| \fint_{\C_{k+1}} u \right|^{\rr}  \\[6pt]
 +C (n - k + 1)^{\rr - 1} \bct{\int_{\mcal{A}_k\cup\mcal{A}_{k+1}}\int_{\mcal{A}_k\cup\mcal{A}_{k+1}} \frac{|u(x)-u(y)|^p |x|^{\alpha p} }{|x-y|^{d+sp}}dxdy}^\frac{a\tau}{p}  \nrm{|x|^\beta u}^{(1-a)\tau}_{L^q(\mcal{A}_k\cup \mcal{A}_{k+1})}.  
\end{multline*}

\pagebreak
 
This yields 
\begin{multline}\label{CKN-part2-2}
\frac{1}{(n - k + 1)^{\nu}} \left| \fint_{\C_{k}} u \right|^{\rr} \le \frac{1}{(n - k + 1/2)^{\nu}}  \left| \fint_{\C_{k+1}} u \right|^{\rr} \\[6pt]+   C(n - k + 1)^{\rr -1 - \nu}  \bct{\int_{\mcal{A}_k\cup\mcal{A}_{k+1}}\int_{\mcal{A}_k\cup\mcal{A}_{k+1}} \frac{|u(x)-u(y)|^p |x|^{\alpha p} }{|x-y|^{d+sp}}dxdy}^\frac{a\tau}{p}   \nrm{|x|^\beta u}^{(1-a)\tau}_{L^q(\mcal{A}_k\cup \mcal{A}_{k+1})}.
\end{multline}

We have, for $\nu > 0$ and $k \le n$,  
\begin{equation}\label{CKN-part3-2}
\frac{1}{(n - k + 1)^{\nu}} - \frac{1}{(n - k + 3/2)^{\nu}} \sim \frac{1}{(n - k + 1)^{\nu + 1}}
\end{equation}
and, since $\tau \le 1 + \nu$,  
\begin{equation}\label{CKN-part3-3}
(n - k + 1)^{\rr -1 - \nu}  \le 1. 
\end{equation}
Summing \eqref{CKN-part2-2} from $m$ to $n$,  and using  \eqref{CKN-part3-2} and \eqref{CKN-part3-3}, we derive  that
\begin{multline}\label{CKN-part4-2}
\sum_{k = m}^n  \frac{1}{(n-k+1)^{1 + \nu}}  \Big| \fint_{\C_{k}} u \Big|^\rr \\[6pt] \le C \sum_{k = m}^n  \bct{\int_{\mcal{A}_k\cup\mcal{A}_{k+1}}\int_{\mcal{A}_k\cup\mcal{A}_{k+1}} \frac{|u(x)-u(y)|^p |x|^{\alpha p} }{|x-y|^{d+sp}}dxdy}^\frac{a\tau}{p}   \nrm{|x|^\beta u}^{(1-a)\tau}_{L^q(\mcal{A}_k\cup \mcal{A}_{k+1})}.
\end{multline}

Combining \eqref{CKN-part1-2} and \eqref{CKN-part4-2},  we obtain  
\begin{multline*}
\int_{\{ |x|  > 2^{m} \}} \frac{|x|^{\gamma \rr}}{\ln^{1 + \nu} (2^{n+1}/|x|)}  |u|^\rr \, dx  \\[6pt] \le C\sum_{k = m}^n \bct{\int_{\mcal{A}_k\cup\mcal{A}_{k+1}}\int_{\mcal{A}_k\cup\mcal{A}_{k+1}} \frac{|u(x)-u(y)|^p |x|^{\alpha p} }{|x-y|^{d+sp}}dxdy}^\frac{a\tau}{p}   \nrm{|x|^\beta u}^{(1-a)\tau}_{L^q(\mcal{A}_k\cup \mcal{A}_{k+1})}. 
\end{multline*}

Applying \Cref{lem-Ineq} with $\kappa =  a \rr /p$ and $\eta = (1-a) \rr / q$,  we derive that
\begin{equation*}
\int_{\{ |x|  > 2^{m} \}} \frac{|x|^{\gamma \rr}}{\ln^{1 + \nu} (2^{n+1}/|x|)}  |u|^\rr \, dx   \le C  \|u\|_{\dot{W}^{s, p , \alpha, \Lambda}(\ro^d)}^{a \rr} \| |x|^\beta u   \|_{L^q( \bigcup_{k = m}^\infty \C_k )}^{(1-a)\rr}.
\end{equation*}
This yields the conclusion. \qed

\medskip 
In the proof of \Cref{thm2-CKNsw}, we used the following elementary lemma which was stated in 
 \cite[Lemma 3.2]{Ng-Squa1}. For the completeness, we give the proof below. 
 
\begin{lem}\label{lem-Holder} Let  $M> 1$ and $\tau \ge 1$.  There exists $C = C(M, \tau) > 0$, depending only on $M$ and $\tau$ such that, for all $1 < c < M$, 
\begin{equation}
\label{lem-Holder-st1}
(|a| + |b|)^\rr \le c |a|^\rr + \frac{C}{(c - 1)^{\rr -1}} |b|^\rr \mbox{ for all } a, b \in \R.  
\end{equation}
\end{lem}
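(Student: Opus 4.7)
\textbf{Proof plan for Lemma~\ref{lem-Holder}.} The plan is to obtain the inequality from the convexity of $t\mapsto t^\tau$ on $[0,\infty)$ via a Jensen-type trick with a free parameter, and then optimize that parameter so that the coefficient of $|a|^\tau$ equals the prescribed constant $c$.

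First, for any $\theta\in(0,1)$, write $|a|+|b|=\theta\cdot(|a|/\theta)+(1-\theta)\cdot(|b|/(1-\theta))$. Since $\tau\ge 1$, the function $t\mapsto t^\tau$ is convex, so Jensen's inequality yields
\begin{equation*}
(|a|+|b|)^\tau \;\le\; \theta^{1-\tau}\,|a|^\tau + (1-\theta)^{1-\tau}\,|b|^\tau.
\end{equation*}
For $\tau>1$ I will choose $\theta=\theta(c)$ so that $\theta^{1-\tau}=c$, i.e.\ $\theta=c^{-1/(\tau-1)}\in(0,1)$. With this choice the coefficient of $|a|^\tau$ matches the statement exactly, and the entire problem reduces to showing
\begin{equation*}
(1-\theta)^{1-\tau} \;=\; \bigl(1-c^{-1/(\tau-1)}\bigr)^{-(\tau-1)} \;\le\; \frac{C(M,\tau)}{(c-1)^{\tau-1}}.
\end{equation*}

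The main (and really only) obstacle is this lower estimate on $1-c^{-1/(\tau-1)}$ by a constant multiple of $c-1$, with the constant allowed to depend on $M$ and $\tau$. This will come from the mean value theorem applied to $f(t)=t^{-1/(\tau-1)}$ on $[1,c]$: for some $\xi\in(1,c)\subset(1,M)$,
\begin{equation*}
1-c^{-1/(\tau-1)} \;=\; f(1)-f(c) \;=\; \frac{1}{\tau-1}\,\xi^{-\tau/(\tau-1)}\,(c-1) \;\ge\; \frac{M^{-\tau/(\tau-1)}}{\tau-1}\,(c-1),
\end{equation*}
where boundedness of $\xi$ by $M$ gives a positive lower bound on $\xi^{-\tau/(\tau-1)}$. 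Raising this to the power $-(\tau-1)$ produces a constant $C(M,\tau)$ as required.

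Finally, the edge case $\tau=1$ has to be handled separately because the choice of $\theta$ above becomes degenerate; but then the claim reads $|a|+|b|\le c|a|+C|b|$ with $(c-1)^{\tau-1}=1$, which holds trivially for any $c\ge 1$ with $C=1$. Combining the two cases completes the proof; no further machinery is needed, and the argument is purely elementary.
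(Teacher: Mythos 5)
Your proof is correct, and it takes a genuinely different route from the paper's. The paper normalizes (WLOG $a,b>0$, divide by $a^\tau$, set $x=b/a$) and reduces the claim to the one-variable inequality $(1+x)^\tau\le c+\frac{C}{(c-1)^{\tau-1}}x^\tau$, which it then verifies by a case split: for small $x$ it linearizes $(1+x)^\tau\le 1+2\tau x$ and absorbs the linear term using Young's inequality applied to $(c-1)+\frac{C}{(c-1)^{\tau-1}}x^\tau$; for $x$ bounded away from $0$ it compares $(1+x)^\tau$ directly with $x^\tau$, which is where the upper bound $c<M$ enters. You instead start from the weighted convexity bound $(|a|+|b|)^\tau\le\theta^{1-\tau}|a|^\tau+(1-\theta)^{1-\tau}|b|^\tau$, tune $\theta=c^{-1/(\tau-1)}$ so the first coefficient is exactly $c$, and then the whole lemma collapses to the single estimate $1-c^{-1/(\tau-1)}\ge \kappa(M,\tau)(c-1)$, which your mean value theorem argument delivers (this is where your $c<M$ enters, through $\xi<M$). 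Your route is shorter, avoids the case analysis in $x$, and yields an explicit admissible constant, roughly $C=(\tau-1)^{\tau-1}M^{\tau}$; the paper's route avoids having to solve for $\theta$ but pays for it with the two-regime argument. Your treatment of $\tau=1$ separately is exactly what is needed, and matches the paper.
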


\begin{proof} The inequality is trivial when $\tau =1$. We next only deal with the case $\tau > 1$. 

Without loss of generality, one might assume that $a \ge 0$ and $b \ge 0$. Inequality \eqref{lem-Holder-st1} is clear if $a = 0$ or $b =0$. Thus it suffices to consider the case where $a > 0$ and $b > 0$. This will be assumed from now on. 
Set $x = a/b$. Multiplying two sides of the inequality by $a^{-\tau}$, it is enough to prove that, for some $C>0$, 
\begin{equation}\label{lem-Holder-p1}
(1 + x)^{\tau} \le c  +  \frac{C}{(c - 1)^{\rr -1}}  x^{\tau} \mbox{ for } x > 0. 
\end{equation}
There exists $x_0>0$ such that, for $0< x < x_0$,  
$$
(1 + x)^{\tau} \le 1 + 2 \tau x.
$$
On the other hand, we have 
$$
c  +  \frac{C}{(c - 1)^{\rr -1}}  x^{\tau} = 1 + (c-1) + \frac{C}{(c - 1)^{\rr -1}}  x^{\tau} \ge 1 + \frac{\tau-1}{\tau}(c-1) + \frac{1}{\tau}\frac{C}{(c - 1)^{\rr -1}}  x^{\tau}. 
$$
Applying the Young inequality, we obtain 
$$
 \frac{\tau-1}{\tau}(c-1) + \frac{1}{\tau}\frac{C}{(c - 1)^{\rr -1}}  x^{\tau} \ge (c-1)^{\frac{\tau-1}{\tau}} \frac{C^{\frac{1}{\tau}}x }{(c - 1)^{\frac{\rr -1}{\tau}}}  \ge 2 x \quad \mbox{ if } \quad C  > C_1  : = 2^{\tau}. 
$$
Thus \eqref{lem-Holder-p1} holds for $0 < x < x_0$ for $C \ge C_1$. 

It is clear that there exists $C_2>0$ such that \eqref{lem-Holder-p1} holds for $x \ge x_0$ for $C \ge C_2$. 

By choosing $C = \max\{C_1, C_2\}$, we obtain \eqref{lem-Holder-p1} and the conclusion follows. 
\end{proof}

\begin{remark} \rm \Cref{lem-Holder} is stated in \cite{Ng-Squa1} for $\tau > 1$. Nevertheless, the result is trivial for $\tau =1$. 
\end{remark}

\section{The Caffarelli-Kohn-Nirenberg inequalities for radial functions in the fractional Sobolev spaces}\label{sect-rad}

This section containing two subsections is devoted to the proofs of \Cref{thm1-rad} and \Cref{thm2-rad}. In the first subsection, we present a lemma which  brings the situation in the radial case into the one of one dimensional space via polar coordinates. The proof of \Cref{thm1-rad} is given in the second subsection by applying \Cref{thm1-CKNsw} in one dimensional space and using the lemma in the first subsection.

\subsection{A useful lemma}

The improvement forms of the CKN inequalities are  inspired by the following lemma. 

\begin{lem}\label{lem-rad1}
Let $d \geq 2$, $0<s<1$, $1 \leq p<\infty$, $\alpha \in \ro$, $\Lambda > 1$,  and let  $u \in L^1_{loc}(\ro^d \setminus \{0 \})$ be  radial.  
We have, with $\hu(r) = u(r \sigma)$ for some $\sigma \in \mS^{d-1}$ and  for $r > 0$,  
\begin{multline}\label{lem-rad1-cl}
\int_{0}^\infty \int_{0}^\infty \frac{|\hu(r_1)- \hu(r_2)|^p r_1^{ \alpha p+ (d-1)} \chi_{\Lambda} (r_1, r_2)}{|r_1 - r_2|^{1+sp}} 
d r_1 d r_2 \\[6pt]
\leq C \int_{\ro^d} \int_{\ro^d} \frac{|u(x)-u(y)|^p |x|^{\alpha p} \chi_\Lambda (|x|, |y|)}{|x-y|^{d+sp}} \,  dx \,  dy,   
\end{multline}
where  $C$ is a positive constant depending only on $d, \, s$, $\alpha$, $p$, and $\Lambda$.
\end{lem}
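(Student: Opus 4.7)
My plan is to pass to polar coordinates on the right-hand side of \eqref{lem-rad1-cl} and reduce the inequality to a pointwise lower bound for an inner spherical integral, uniform on the support of $\chi_\Lambda$.

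First, writing $x = r_1 \omega_1$ and $y = r_2 \omega_2$ with $\omega_i \in \mS^{d-1}$, and using that $u$ is radial so that $|u(x) - u(y)|^p = |\hu(r_1) - \hu(r_2)|^p$, the right-hand side of \eqref{lem-rad1-cl} equals
\begin{equation*}
\int_0^\infty \int_0^\infty |\hu(r_1) - \hu(r_2)|^p r_1^{\alpha p} \chi_\Lambda(r_1, r_2) (r_1 r_2)^{d-1} J(r_1, r_2) \, dr_1 \, dr_2,
\end{equation*}
where $J(r_1, r_2) := \int_{\mS^{d-1}} \int_{\mS^{d-1}} |r_1 \omega_1 - r_2 \omega_2|^{-(d+sp)} d\omega_1 d\omega_2$. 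Rotation invariance of the surface measure collapses $J$, up to the factor $|\mS^{d-1}|$, to a single spherical integral $\int_{\mS^{d-1}} |r_1 e - r_2 \omega|^{-(d+sp)} d\omega$ for any fixed pole $e \in \mS^{d-1}$. Since $r_2 \sim r_1$ on $\{\chi_\Lambda = 1\}$ with constants depending only on $\Lambda$, comparing with the left-hand side of \eqref{lem-rad1-cl} reduces the lemma to the pointwise bound
\begin{equation*}
J(r_1, r_2) \ \gtrsim \ \frac{1}{r_1^{d-1} |r_1 - r_2|^{1+sp}} \qquad \text{on } \{\chi_\Lambda(r_1, r_2) = 1\}.
\end{equation*}

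To establish this I would use the identity $|r_1 e - r_2 \omega|^2 = (r_1 - r_2)^2 + 2 r_1 r_2 (1 - \cos\theta)$, with $\theta \in [0, \pi]$ the angle between $e$ and $\omega$. Restricting to $\theta \in [0, \pi/2]$ and using $1 - \cos\theta \gtrsim \theta^2$, $(\sin\theta)^{d-2} \gtrsim \theta^{d-2}$, and $r_1 r_2 \sim r_1^2$, the inner integral is at least a constant multiple of $\int_0^{\pi/2} \theta^{d-2} \bigl((r_1 - r_2)^2 + r_1^2 \theta^2\bigr)^{-(d+sp)/2} d\theta$. The substitution $\theta = |r_1 - r_2| t / r_1$ converts this into
\begin{equation*}
\frac{1}{r_1^{d-1} |r_1 - r_2|^{1+sp}} \int_0^{r_1 \pi / (2|r_1 - r_2|)} \frac{t^{d-2}}{(1 + t^2)^{(d+sp)/2}} dt.
\end{equation*}
The upper limit is at least $\pi/(2(\Lambda - 1))$ since $|r_1 - r_2| \le (\Lambda - 1) r_1$ on $\{\chi_\Lambda = 1\}$, and the total integral over $(0, \infty)$ converges (integrability at $0$ uses $d \ge 2$, at $\infty$ uses $sp > 0$). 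Hence the truncated integral is bounded below by a positive constant depending only on $d, s, p, \Lambda$, which yields the required lower bound on $J$ and hence \eqref{lem-rad1-cl}.

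The only genuinely computational step is this uniform positive lower bound on the spherical integral; everything else is a change of variables and bookkeeping. The assumption $d \ge 2$ enters precisely as the integrability of $t^{d-2}$ near zero in the last display, which is why the radial estimate cannot be expected in dimension one.
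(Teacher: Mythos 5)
Your proposal is correct and follows essentially the same route as the paper: pass to polar coordinates and prove the pointwise lower bound $J(r_1,r_2)\gtrsim r_1^{-(d-1)}|r_1-r_2|^{-(1+sp)}$ on the support of $\chi_\Lambda$ by a one-dimensional change of variables (the paper gets the same bound via the triangle inequality $|r_1\sigma_1-r_2\sigma_2|\le|r_1-r_2|+r_2|\sigma_1-\sigma_2|$ rather than the law of cosines). One small slip: to bound the denominator $(r_1-r_2)^2+2r_1r_2(1-\cos\theta)$ from above you need $1-\cos\theta\lesssim\theta^2$, not the stated $\gtrsim$; this is harmless since $1-\cos\theta\sim\theta^2$ on $[0,\pi/2]$.
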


\begin{proof} The proof is simply based on the use of the polar coordinates. Using these coordinates, we have  
\begin{multline}\label{lem-rad1-p1}
\int_{\ro^d}\int_{\ro^d} \frac{|u(x_1)-u(x_2)|^p|x|^{\alpha p} \chi_{\Lambda} (|x_1|, |x_2|)}{|x_1 - x_2|^{d+sp} } \, dx_1 \, d x_2 \\[6pt]
= \int_0^\infty \int_0^\infty |\hu(r_1)- \hu(r_2)|^p r_1^{\alpha p  + (d-1)} r_2^{d-1} \chi_{\Lambda} (r_1, r_2)  \int_{\mathbb S^{d-1}} \int_{\mathbb S^{d-1}} \frac{d \sigma_1 d\sigma_2}{|r_1 \sigma_1 - r_2 \sigma_2|^{d+sp}} \, dr_1 \, d r_2. 
\end{multline}
Since 
$$
|r_1 \sigma_1 - r_2 \sigma_2| = |(r_1- r_2) \sigma_1 + r_2 (\sigma_1 - \sigma_2)| \le  |r_1 - r_2| + |r_2|  |\sigma_1 - \sigma_2|, 
$$
it follows that, for $\Lambda r_1 \le r_2 \le \Lambda r_1$,
\begin{equation}\label{lem-rad1-p2}
\int_{\mathbb S^{d-1}} \int_{\mathbb S^{d-1}} \frac{d \sigma_1 d\sigma_2}{|r_1 \sigma_1 - r_2 \sigma_2|^{d+sp}} \ge C \int_{0}^1 \frac{s^{d-2} ds}{\big(|r_1 - r_2| + |r_2|  s\big)^{d+ sp}} \ge \frac{C}{r_2^{d-1} |r_1 - r_2|^{1 + sp}}. 
\end{equation} 
The conclusion now follows from \eqref{lem-rad1-p1} and \eqref{lem-rad1-p2}. 
\end{proof}

\subsection{Proof of \Cref{thm1-rad}}  Denote $\hu(r) = u(r \sigma)$ with $r > 0$ and $\sigma \in  \mS^{d-1}$. We have, by polar coordinates,  
\begin{equation}
\| |x|^\gamma u\|_{L^\tau(\mR^d)} = |\mS^{d-1}|^{\frac{1}{\tau}} \| r^{\gamma + \frac{d-1}{\tau}} \hu\|_{L^\tau(0, \infty)},  
\end{equation}
\begin{equation}\label{thm1-rad-m1}
\| |x|^\beta u\|_{L^q(\mR^d)} = |\mS^{d-1}|^{\frac{1}{q}} \| r^{\beta + \frac{d-1}{q}} \hu\|_{L^q(0, \infty)},
\end{equation}
and by \Cref{lem-rad1}, 
\begin{multline}\label{thm1-rad-m2}
\int_0^\infty \int_0^\infty \frac{|\hu(r_1) - \hu(r_2)|^p r_1^{\alpha p + d -1 } \chi_{\Lambda} (r_1, r_2)}{|r_1 - r_2|^{1 + sp }}  \, d r_1 \, dr_2  \\[6pt]
\le C \int_{\ro^d}\int_{\ro^d} \frac{|u(x)-u(y)|^p|x|^{\alpha p} \chi_\Lambda (|x|, |y|)}{|x-y|^{d+sp}}dx dy. 
\end{multline}

Extend $\hu$ in $\mR$ as an even function and still denote the extension by $\hu$. We have 
\begin{equation}
\| |\xi|^{\gamma + \frac{d-1}{\tau}} \hu\|_{L^\tau(\mR)}  \sim  \| r^{\gamma + \frac{d-1}{\tau}} \hu\|_{L^\tau(0, \infty)} 
\end{equation}
\begin{equation}
\| |\xi|^{\beta + \frac{d-1}{q}} \hu\|_{L^\tau(\mR)}  \sim \| r^{\beta + \frac{d-1}{q}} \hu\|_{L^q(0, \infty)},
\end{equation}
and 
\begin{multline}
\int_\mR \int_\mR \frac{|\hu(\xi_1) - \hu(\xi_2)|^p |\xi_1|^{\alpha p + d-1} \chi_{\Lambda} (|\xi_1|, |\xi_2|) }{|\xi_1 - \xi_2|^{1 + sp }} 
\, d \xi_1 \, d \xi_2  \\[6pt]
\le 4 \int_0^\infty \int_0^\infty \frac{|\hu(r_1) - \hu(r_2)|^p r_1^{\alpha p + d-1} \chi_{\Lambda} (r_1, r_2)}{|r_1 - r_2|^{1 + sp }}  \, d r_1 \, dr_2.  
\end{multline}
Hereafter in this proof, two quantities are $\sim$ if each one is bounded by the other up to a positive constant depending only on the parameters.

It thus suffices to prove 
\begin{equation}\label{thm1-rad-m3}
\| |\xi|^{\gamma + \frac{d-1}{\tau}} \hu\|_{L^\tau(\mR)} \le C \| \hu \|^a_{\dot{W}^{s, p, \alpha + \frac{d-1}{p}, \Lambda}(\mR)}\| |\xi|^{\beta + \frac{d-1}{q}} \hu\|_{L^\tau(\mR)}^{1-a}. 
\end{equation}
This is in fact a consequence of \Cref{thm1-CKNsw} in one dimensional case. To this end, 
let first rewrite the conclusion of \Cref{thm1-CKNsw} in one dimensional case. Let $0<s'<1$, $p'>1$, $q'\geq1$ $\tau'\geq1$, $0 <  a' \leq 1$, $\alpha'$, $\beta'$, $\gamma' \in \ro$ and define $\sigma'$ by  $\sigma' \in \ro$ by $\gamma'= a' \sigma'+ (1-a')\beta'$.  Assume that
\begin{equation}\label{thm1-rad-p1}
\frac{1}{\tau'}+\gamma'= a'\bct{\frac{1}{p'}+ \alpha'-s'}+(1-a')\bct{\frac{1}{q'}+\beta'}, 
\end{equation}
\begin{equation}\label{thm1-rad-p2}
0\leq \alpha'-\sigma', 
\end{equation}
and 
\begin{equation}\label{thm1-rad-p3}
\alpha'-\sigma' \leq s'  \text{ if } \frac{1}{\tau'}+ \gamma'= \frac{1}{p'}+\alpha'-s'.
\end{equation}
Then, if $\frac{1}{\tau'}+\gamma'>0$,  it holds 
\begin{equation}\label{thm1-rad-p4}
\nrm{|x|^{\gamma'} g }_{L^{\tau'}(\ro)}\leq C \nrm{g}^{a'}_{\dot{W}^{s',p',\alpha', 4}(\ro)}\nrm{|x|^{\beta'} g}^{1-a'}_{L^{q'}(\ro)} \mbox{ for } g \in L^1_{loc}(\mR\setminus \{0\}), \text{ with compact support in } \ro, 
\end{equation}
and if $\frac{1}{\tau'}+\gamma' < 0$,  it holds 
\begin{equation}\label{thm1-rad-p5}
\nrm{|x|^{\gamma'} g }_{L^{\tau'}(\ro)}\leq C \nrm{g}^{a'}_{\dot{W}^{s',p',\alpha', 4}(\ro)}\nrm{|x|^{\beta'} g}^{1-a'}_{L^{q'}(\ro)} \mbox{ for } g \in L^1_{loc}(\mR) \mbox{ with } 0 \not \in \mbox{supp } g.
\end{equation}

We are applying \eqref{thm1-rad-p4} and \eqref{thm1-rad-p5} with $s'=s$, $a'=a$, $p'=p$, $q'=q$, $\tau'=\tau$, $\alpha'= \alpha + \frac{d-1}{p}$, $\beta'= \beta +  \frac{d-1}{q}$,  $\gamma'= \gamma + \frac{d-1}{\tau}$, $a\sigma'+(1-a)\beta'= \gamma'$. Then clearly,
$$\frac{1}{\tau'}+\gamma'=\frac{d}{\tau}+\gamma, \ \ \frac{1}{p'}+\alpha'-s'= \frac{d}{p}+\alpha-s, \ \ \frac{1}{q'}+\beta'=\frac{d}{q}+\beta.$$ 
Hence \eqref{thm1-rad-p1} follows from \eqref{balance-law2}. 

We next compute $\alpha'-\sigma'$. Since $a \sigma' + (1-a) \beta' = \gamma' = \gamma + \frac{d-1}{\tau}$ and  $a \sigma + (1-a) \beta = \gamma$, it follows that 
\begin{multline*}
a(\sigma' - \sigma) = \frac{d-1}{\tau} - (1 -a) (\beta' - \beta) =  \frac{d-1}{\tau} - \frac{(1-a) (d-1) }{q} \\[6pt] 
= (d-1) \left(\frac{1}{\tau} - \frac{1-a}{q} \right)  
\mathop{=}^{\eqref{balance-law2}} a (d-1) \left( \frac{1}{p} + \frac{\alpha - \sigma - s}{d} \right).  
\end{multline*}
It follows that 
\begin{equation*}
\alpha' - \sigma' = \alpha + \frac{d-1}{p} - \sigma - 
(d-1) \left( \frac{1}{p} + \frac{\alpha - \sigma - s}{d} \right) = \frac{\alpha - \sigma}{d} + \frac{s (d-1)}{d}.  
\end{equation*}
This yields that $\alpha' - \sigma' \ge 0$ if and only if $\alpha - \sigma \ge - s (d-1)$. 

\medskip 
The conclusion now follows from \eqref{thm1-rad-p4} and \eqref{thm1-rad-p5}. \qed

\subsection{Proof of \Cref{thm2-CKNsw}} The proof is in the same spirit of the one of \Cref{thm1-CKNsw}.  For the convenience of the reader, we briefly describe the main lines. Denote $\hu(r) = u(r \sigma)$ with $r > 0$ and $\sigma \in  \mS^{d-1}$. We have, by polar coordinates, 
\begin{equation}
\left(\int_{\ro^d} \frac{|x|^{\gamma \tau}}{\ln^{\mu} (2R_2/|x|)}|u|^{\tau}dx  \right)^{1/ \tau} = |\mS^{d-1}|^{1/\tau} \left( \int_0^\infty \frac{r^{\gamma \tau + d -1}}{\ln^{\mu} (2R_2 r)}|\hu|^{\tau}dr \right)^{1/\tau}
\end{equation}
and 
\begin{equation}
\left(\int_{\ro^d} \frac{|x|^{\gamma \tau}}{\ln^{\mu} (2|x|/R_1)}|u|^{\tau} \, dx  \right)^{1/ \tau} = |\mS^{d-1}|^{1/\tau}  \left(\int_0^\infty \frac{r^{\gamma \tau + (d-1)}}{\ln^{\mu} (2r/R_1)} |\hu|^{\tau} \, dr \right)^{1/ \tau}. 
\end{equation}

Extend $\hu$ in $\mR$ as an even function and still denote the extension by $\hu$. Using \eqref{thm1-rad-m1} and \eqref{thm1-rad-m2}, as in \eqref{thm1-rad-m3}, it suffices to prove that   if  $\mathrm{supp} \ \hu \subset B_{R_2} \subset \mR$,  then it holds 
\begin{equation}
\bct{\int_{\ro} \frac{|\xi|^{(\gamma + \frac{d-1}{\tau} ) \tau }}{\ln^{\mu} (2R_2/|\xi|)} | \hu |^{\tau} \, d \xi}^\frac{1}{\tau} \leq C \nrm{\hu}^a_{\dot{W}^{s, p,\alpha + \frac{d-1}{p}, \Lambda}(\mR)}\nrm{|\xi|^{\beta + \frac{d-1}{q}} |\hu|^{1-a}}_{L^q(\mR)}, 
\end{equation}
and  if $\mathrm{supp} \ \hu \cap B_{R_1}= \emptyset$, then it holds 
\begin{equation}
\bct{\int_{\ro} \frac{|\xi|^{(\gamma + \frac{d-1}{\tau} ) \tau }}{\ln^{\mu} (2|\xi|/ R_1)} |\hu|^{\tau} \, d \xi}^\frac{1}{\tau} \leq C \nrm{\hu}^a_{\dot{W}^{s, p,\alpha + \frac{d-1}{p}, \Lambda}(\mR)}\nrm{|\xi|^{\beta + \frac{d-1}{q}} |\hu|^{1-a}}_{L^q(\mR)}. 
\end{equation}

The conclusion now follows from \Cref{thm2-CKNsw} as in the proof of \Cref{thm2-rad}. The details are omitted. \qed

\medskip
We next show the optimality  of condition $\alpha-\sigma\ge-(d-1)s$ given in \eqref{cond-s-1-*}.

\begin{pro}\label{optimality}
The condition $\alpha-\sigma\geq -(d-1)s$ in \eqref{cond-s-1-*} is necessary for the assertions in \Cref{thm1-rad} to hold.
\end{pro}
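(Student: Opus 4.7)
The plan is to test the claimed inequality on a one-parameter family of radial bump functions concentrated in a thin spherical shell of growing radius. Fix $\phi \in C_c^\infty(\mR)$ with $\phi \not \equiv 0$ and $\mathrm{supp}\,\phi \subset (-1,1)$, and for $R > 2 \Lambda$ define the radial function
\begin{equation*}
u_R(x) = \phi(|x|-R) \quad \text{on } \mR^d.
\end{equation*}
Then $u_R$ has compact support in $\mR^d$ and vanishes in a neighbourhood of the origin, so both $i)$ and $ii)$ of \Cref{thm1-rad} may be applied to $u_R$. Since $\chi_\Lambda(|x|,|y|) = 0$ whenever both $|x|,|y| \in [R-1,R+1]$ and one of them leaves $[(R-1)/\Lambda,\,\Lambda(R+1)]$, for $R$ large we only see points where all relevant radii are comparable to $R$.

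First I will compute the easy side. Passing to polar coordinates,
\begin{equation*}
\||x|^\gamma u_R\|_{L^\tau(\mR^d)} \sim R^{\gamma+(d-1)/\tau}, \qquad \||x|^\beta u_R\|_{L^q(\mR^d)} \sim R^{\beta+(d-1)/q},
\end{equation*}
with constants depending on $\phi,d$ but not on $R$; the lower bound on the first norm and the upper bound on the second are what I will actually use. Next I will bound the seminorm from above. On the support of the integrand at least one of $|x|,|y|$ lies in $[R-1,R+1]$, and thanks to $\chi_\Lambda$ the other radius is of order $R$ too; hence $|x|^{\alpha p} \le C_\Lambda R^{\alpha p}$ on the effective region. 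Using the Lipschitz and boundedness estimates $|\phi(|x|-R)-\phi(|y|-R)| \le C\min(1,|x-y|)$ and splitting the $y$-integral into $|x-y|<1$ and $|x-y|\ge 1$ (both convergent since $0<s<1$), I will obtain
\begin{equation*}
\|u_R\|_{\dot W^{s,p,\alpha,\Lambda}(\mR^d)}^p \le C R^{\alpha p} \int_{R-1 \le |x| \le R+1} dx \le C R^{\alpha p + d-1},
\end{equation*}
so $\|u_R\|_{\dot W^{s,p,\alpha,\Lambda}(\mR^d)} \le C R^{\alpha+(d-1)/p}$.

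Feeding these estimates into \eqref{thm1-rad-1} (or \eqref{thm1-rad-2}) yields
\begin{equation*}
R^{\gamma+(d-1)/\tau} \le C R^{\,a[\alpha+(d-1)/p]\,+\,(1-a)[\beta+(d-1)/q]},
\end{equation*}
so the RHS exponent must dominate the LHS exponent as $R \to +\infty$. A short computation that uses the balance law \eqref{balance-law2} to rewrite $a/p + (1-a)/q - 1/\tau = a[\sigma-\alpha+s]/d$, together with $\gamma = a\sigma + (1-a)\beta$, reduces the difference of the two exponents to
\begin{equation*}
\frac{a}{d}\bigl[\alpha - \sigma + (d-1)s\bigr],
\end{equation*}
which must therefore be non-negative. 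Since $a > 0$ this forces $\alpha - \sigma \ge -(d-1)s$, proving the claim.

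The main obstacle is the upper estimate on the $d$-dimensional fractional seminorm: one must verify that the nonlocal pairing does not produce any extra growth in $R$ beyond the obvious contribution coming from the $|x|^{\alpha p}$ weight and the $(d-1)$-dimensional volume of the shell. The pointwise Lipschitz/boundedness dichotomy for $\phi(|\cdot|-R)$ combined with the $\chi_\Lambda$ cut-off makes this elementary, but it is the only step where care is needed; everything else is bookkeeping with the balance law.
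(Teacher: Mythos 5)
Your proposal is correct and follows essentially the same route as the paper: test the inequality on radial bumps $u_R(x)=\phi(|x|-R)$ supported in a shell of radius $R$, verify that $\|u_R\|_{\dot W^{s,p,\alpha,\Lambda}}\le CR^{\alpha+(d-1)/p}$ while $\||x|^\gamma u_R\|_{L^\tau}\sim R^{\gamma+(d-1)/\tau}$ and $\||x|^\beta u_R\|_{L^q}\sim R^{\beta+(d-1)/q}$, and compare exponents via the balance law. The only cosmetic difference is that you bound the Gagliardo seminorm directly by the $\min(1,|x-y|)$ splitting, whereas the paper invokes the standard bound $\|u_R\|_{\dot W^{s,p,0,\Lambda}}\le C\|u_R\|_{W^{1,p}}$ (whose proof is that same splitting); your exponent computation reducing the difference to $\tfrac{a}{d}[\alpha-\sigma+(d-1)s]$ checks out.
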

\begin{proof}
Let $v\in C_c^\infty(\ro)$ with $\mathrm{supp}\ v \subset (0,1)$. For large $R>0$ define $u_R(x):= v(|x|-R)$, for $x\in \ro^d$. Clearly, $u_R\in C_c^\infty(\ro^d)$ with $\mathrm{supp} \ u_R \subset \mcal{A}_{R,R+1}$, where for any $b, c \in (0,\infty)$, with $b<c$, the set $\mcal{A}_{b,c}$ is defined by $$\mcal{A}_{b,c}:= \{x\in \ro^d: b<|x|<c \}.$$ 
We denote $$\gamma':=\frac{d-1}{\tau}+\gamma, \ \alpha': = \frac{d-1}{p}+\alpha, \ \beta':= \frac{d-1}{q}+\beta.$$ 
One can check that  
 \begin{equation}\label{est1}
\nrm{u_R}_{\dot{W}^{s,p,0,\Lambda}(\ro^d)} \le C \nrm{u_R}_{W^{1,p}(\ro^d)} \le C R^\frac{d-1}{p}, 
 \end{equation}
and  since $\mathrm{supp} \ u_R \subset \mcal{A}_{R,R+1}$, 
 \begin{equation}\label{est1-1}
\nrm{u_R}_{\dot{W}^{s,p,\alpha,\Lambda}(\ro^d)} \le \nrm{u_R}_{\dot{W}^{s,p,\alpha,\Lambda}\bct{\mcal{A}_{R\Lambda^{-1}, (R+1)\Lambda} \times \mcal{A}_{R\Lambda^{-1}, (R+1)\Lambda}}} \le C R^{\alpha } \nrm{u_R}_{\dot{W}^{s,p,0,\Lambda}(\ro^d)}. 
 \end{equation}
Combining \eqref{est1} and \eqref{est1-1} yields  
 \begin{equation}\label{est2}
 \nrm{u_R}_{\dot{W}^{s,p,\alpha,\Lambda}(\ro^d)}^a \le  C R^{a\alpha'}.
 \end{equation}

 On the other hand, one can check that 
 \begin{equation}\label{est3}
 \nrm{|x|^\gamma u_R}_{L^\tau(\ro^d)} \sim R^{\gamma'}  \text{ and } \nrm{|x|^\beta u_R}_{L^q(\ro^d)}^{1-a} \sim R^{(1-a)\beta'}.
 \end{equation}
 Therefore, if either \eqref{thm1-rad-1} or \eqref{thm1-rad-2} holds then using them for $u=u_R$, we conclude form \eqref{est2} and \eqref{est3} 
 $$R^{\gamma'}\leq C R^{a\alpha'+(1-a)\beta'}  \text{ for } R \text{ large and } C>0 \text{ independent of } R,$$
 which is possible only when $\alpha-\sigma \geq -(d-1)s$. The proof is complete. 
 \end{proof}

\section{The Caffarelli-Kohn-Nirenberg inequalities for radial functions in the Sobolev spaces}\label{sect-s=1}

In this section, we present the result in the case $s=1$. We first state variants/improvements of the  CKN inequalities in the Sobolev spaces which follows directly from the approach given in \cite{Ng-Squa1} (see also the proof of \Cref{thm1-CKNsw}). We begin with the case $1/ \tau + \gamma/ d \neq 0$.

\begin{thm}\label{thm3}
Let $d\geq 1$, $p \ge 1$, $q\geq 1$, $\tau \ge 1$, $0 <  a \leq 1$,  and $\alpha, \beta, \gamma \in \ro$. Define $\sigma $ by \eqref{def-alpha-gamma}. Assume \eqref{balance-law1}, \eqref{coucou1}, and \eqref{coucou2}.  We have, for some positive constant $C$, 
\begin{itemize}
\item[$i)$] if $\frac{1}{\tau}+\frac{\gamma}{d}>0$, then for all $u\in L^1_{loc}(\ro^d \setminus \{0 \})$ with compact support in $\mR^d$, it  holds
\begin{equation}\label{CKN-original-M}
\nrm{|x|^\tau u}_{L^\tau(\ro^d)} \leq C\nrm{|x|^\alpha \nabla u}^a_{L^p(\ro^d \setminus \{0 \})} \nrm{|x|^\beta u}^{1-a}_{L^q(\ro^d)}, 
\end{equation}

 \item[$ii)$] if $\frac{1}{\tau}+\frac{\gamma}{d}<0$, then for all $u\in L^1_{loc}(\ro^d \setminus \{0 \})$ which is 0 in a neighborhood of $0$, \eqref{CKN-original-M} holds.  
\end{itemize}
\end{thm}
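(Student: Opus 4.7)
The plan is to follow the dyadic decomposition scheme used in the proof of \Cref{thm1-CKNsw}, substituting the classical ($s=1$) Gagliardo--Nirenberg inequality on annuli for its fractional counterpart. As in \cite{CKN}, a standard scaling/interpolation argument first reduces to the regime $0\le \alpha-\sigma\le 1$: when $\alpha-\sigma>1$ and $\frac{1}{\tau}+\frac{\gamma}{d}\neq \frac{1}{p}+\frac{\alpha-1}{d}$, the dilations $u(\lambda\cdot)$ change each of the three norms in \eqref{CKN-original-M} by an explicit power of $\lambda$, and the mismatch of these powers permits interpolation between two boundary cases with $\alpha-\sigma\le 1$.

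Assume $0\le \alpha-\sigma\le 1$ and set $\mathcal{A}_k=\{2^k\le |x|<2^{k+1}\}$. Classical Gagliardo--Nirenberg on $\mathcal{A}_k$, together with \eqref{balance-law1}, yields the annular bound
\begin{equation*}
\int_{\mathcal{A}_k}|u|^{\tau}|x|^{\gamma\tau}\,dx \le C\, 2^{(\gamma\tau+d)k}\Big|\fint_{\mathcal{A}_k}u\Big|^{\tau} + C\,\nrm{|x|^{\alpha}\nabla u}_{L^p(\mathcal{A}_k)}^{a\tau}\,\nrm{|x|^{\beta}u}_{L^q(\mathcal{A}_k)}^{(1-a)\tau}.
\end{equation*}
Summing in $k$ and invoking \Cref{lem-Ineq} with $\kappa=a\tau/p$ and $\eta=(1-a)\tau/q$ --- whose hypothesis $\kappa+\eta\ge 1$ is equivalent to $\alpha-\sigma\le 1$ via \eqref{balance-law1} --- reduces \eqref{CKN-original-M} to controlling $\sum_k 2^{(\gamma\tau+d)k}\big|\fint_{\mathcal{A}_k}u\big|^{\tau}$.

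To control that sum, I apply Gagliardo--Nirenberg on the pair $\mathcal{A}_k\cup\mathcal{A}_{k+1}$, dominating the telescoping differences $\big|\fint_{\mathcal{A}_k}u-\fint_{\mathcal{A}_{k+1}}u\big|^{\tau}$ by the same type of quantities. In case $(i)$, where $\gamma\tau+d>0$, the constant $c=2/(1+2^{\gamma\tau+d})<1$ produces a forward-contracting recursion; summing from $k=n$ (with $\mathrm{supp}\,u\subset B_{2^n}$) down to $k=m$, letting $m\to-\infty$, and applying \Cref{lem-Ineq} once more yields the required estimate. In case $(ii)$, where $\gamma\tau+d<0$, the contraction $c=(1+2^{\gamma\tau+d})/2<1$ runs the recursion backward from $k=m$ (with $u\equiv 0$ in $B_{2^m}$) up to $k=n\to+\infty$.

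The argument is essentially a transcription of the fractional proof. The only non-routine checks are that the scaling reduction remains valid for the classical gradient seminorm, which it does since $\nrm{|x|^{\alpha}\nabla u(\lambda\cdot)}_{L^p(\ro^d)}=\lambda^{1-\alpha-d/p}\nrm{|x|^{\alpha}\nabla u}_{L^p(\ro^d)}$, and that the support hypotheses on $u$ match the direction of the telescoping. The main bookkeeping obstacle is tracking the exponents so that $\alpha-\sigma\le 1$ plays the same role here that $\alpha-\sigma\le s$ did in \Cref{thm1-CKNsw}.
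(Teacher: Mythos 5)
Your proposal is correct and is essentially the proof the paper intends: the authors state that \Cref{thm3} ``follows directly from the approach given in \cite{Ng-Squa1} (see also the proof of \Cref{thm1-CKNsw})'', and your argument is exactly that transcription --- classical Gagliardo--Nirenberg on dyadic annuli, \Cref{lem-Ineq} with $\kappa+\eta\ge 1$ equivalent to $\alpha-\sigma\le 1$ via \eqref{balance-law1}, the telescoping recursion with contraction factor determined by the sign of $\gamma\tau+d$, and the scaling/interpolation reduction when $\alpha-\sigma>1$. The exponent bookkeeping and the scaling identity $\nrm{|x|^{\alpha}\nabla u(\lambda\cdot)}_{L^p}=\lambda^{1-\alpha-d/p}\nrm{|x|^{\alpha}\nabla u}_{L^p}$ are both right.
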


Concerning the limiting case $1/ \tau +  \gamma/ d = 0 $, one has the following result.  

\begin{thm}\label{thm4}
Let $d\geq 1$, $p \ge 1$, $q\geq 1$, $\tau \ge 1$, $0 <  a \leq 1$,  and $\alpha, \beta, \gamma \in \ro$, and $\mu> 1$. Assume that $\tau \le \mu$.  Define $\sigma $ by \eqref{def-alpha-gamma}. Assume \eqref{balance-law1} and 
\begin{equation}\label{thm4-st1}
0 \le \alpha - \sigma \le 1. 
\end{equation}
  There exists a positive constant $C$ such that for all $u\in L^1_{loc}(\ro^d \setminus \{0 \})$ and for all $R_1, R_2 > 0 $,  we have 
\begin{itemize}
\item[(i)] if $\frac{1}{\tau}+\frac{\gamma}{d}=0$ and $\mathrm{supp}\ u \subset B_{R_2}$, then 
\begin{equation}\label{thm4-p1}
\bct{\int_{\ro^d} \frac{|x|^{\gamma \tau}}{\ln^{\mu} (2R_2/|x|)}|u|^{\tau}dx}^\frac{1}{\tau}\leq C \| |x|^{\alpha} \nabla u \|_{L^p(\mR^d \setminus \{0 \})}^a \nrm{|x|^\beta u}^{1-a}_{L^q(\ro^d)}, 
\end{equation}
 \item[(ii)] if $\frac{1}{\tau}+\frac{\gamma}{d}=0$, and $\mathrm{supp} \ u \cap B_{R_1}= \emptyset$, then 
 \begin{equation}\label{thm4-p2}
\bct{\int_{\ro^d} \frac{|x|^{\gamma \tau}}{\ln^{\mu} (2|x|/R_1)}|u|^{\tau}dx}^\frac{1}{\tau}\leq C \| |x|^{\alpha} \nabla u \|_{L^p(\mR^d \setminus \{0 \})}^a \nrm{|x|^\beta u}^{1-a}_{L^q(\ro^d)}. 
\end{equation}
\end{itemize}
\end{thm}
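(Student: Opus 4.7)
The plan is to transpose the proof of Theorem~\ref{thm2-CKNsw} into the Sobolev setting $s=1$, replacing the fractional seminorm on each annulus by the Dirichlet energy $\int|\nabla u|^p$. I focus on assertion $(i)$; assertion $(ii)$ follows by the symmetric dyadic argument, summing outward from a shell on which $u$ vanishes (compare Step~2 of the proof of Theorem~\ref{thm1-CKNsw}).

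Fix $n\in\mathbb{Z}$ with $2^{n-1}\le R_2<2^n$ and decompose $\mR^d\setminus\{0\}$ into the dyadic annuli $\mcal{A}_k$ of~\eqref{def-Ak}. Applying the classical Gagliardo-Nirenberg inequality to $u-\fint_{\mcal{A}_k}u$ on $\mcal{A}_k$ and rescaling, then inserting the weights via $|x|\sim 2^k$ and invoking the balance law~\eqref{balance-law1}, I obtain the Sobolev analog of~\eqref{sameAnnEst}:
\begin{equation*}
\int_{\mcal{A}_k}|u|^\tau|x|^{\gamma\tau}\,dx \le C\,2^{(\gamma\tau+d)k}\Bigl|\fint_{\mcal{A}_k}u\Bigr|^\tau + C\Bigl(\int_{\mcal{A}_k}|\nabla u|^p|x|^{\alpha p}\,dx\Bigr)^{a\tau/p}\Bigl(\int_{\mcal{A}_k}|u|^q|x|^{\beta q}\,dx\Bigr)^{(1-a)\tau/q}.
\end{equation*}
Since here $\gamma\tau+d=0$, the factor $2^{(\gamma\tau+d)k}$ is $1$, and on $\mcal{A}_k$ the weight obeys $\ln(2R_2/|x|)\sim(n-k+1)$. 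Setting $\nu=\mu-1>0$ and summing the bound from $k=m$ up to $k=n$ against the weight $(n-k+1)^{-(1+\nu)}$ produces an inequality in the shape of~\eqref{CKN-part1-2}.

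The crux is then to control $\sum_{k\le n}(n-k+1)^{-(1+\nu)}|\fint_{\mcal{A}_k}u|^\tau$. I would compare consecutive averages by Gagliardo-Nirenberg on $\mcal{A}_k\cup\mcal{A}_{k+1}$ to obtain the Sobolev version of~\eqref{CKN-part1-2-1}; then apply Lemma~\ref{lem-Holder} with $c=(n-k+1)^\nu/(n-k+1/2)^\nu$ (so $c-1\sim 1/(n-k+1)$ and $1<c\le M$) to produce the telescoping inequality~\eqref{CKN-part2-2}. Summing from $m$ to $n$ the telescope closes thanks to $\nu>0$, and the residual weight $(n-k+1)^{\tau-1-\nu}\le 1$ is exactly the hypothesis $\tau\le\mu$. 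Finally Lemma~\ref{lem-Ineq} with $\kappa=a\tau/p$, $\eta=(1-a)\tau/q$ (where $\kappa+\eta\ge 1$ comes from $\alpha-\sigma\le 1$ in~\eqref{thm4-st1}) packages the annular Gagliardo-Nirenberg and $L^q$ pieces into the global quantities $\||x|^\alpha\nabla u\|_{L^p(\mR^d\setminus\{0\})}^{a\tau}$ and $\||x|^\beta u\|_{L^q(\mR^d)}^{(1-a)\tau}$, yielding~\eqref{thm4-p1}; letting $m\to-\infty$ presents no difficulty since each step is uniform in $m$.

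The principal obstacle is essentially bookkeeping: every functional-analytic input—the weighted Gagliardo-Nirenberg inequality on an annulus, Lemmas~\ref{lem-Ineq} and~\ref{lem-Holder}, and the telescoping trick—is already available. The only delicate point is to check that the one-annulus Gagliardo-Nirenberg inequality for $s=1$ remains valid under $0\le\alpha-\sigma\le 1$, which is the Sobolev counterpart of~\eqref{cond-s-3}, and to verify that the balance law~\eqref{balance-law1} rather than~\eqref{balance-law2} produces the correct matching of scaling exponents at every step where $|x|^\alpha$, $|x|^\beta$ and $|x|^\gamma$ are absorbed into annulus-local quantities.
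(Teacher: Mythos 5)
Your proposal is correct and follows essentially the same route the paper intends: Theorem~\ref{thm4} is stated there as a direct transposition of the dyadic-annulus argument of Theorem~\ref{thm2-CKNsw} (Gagliardo--Nirenberg on each $\mcal{A}_k$, Lemma~\ref{lem-Holder} with $c-1\sim 1/(n-k+1)$ to telescope the averages, $\tau\le\mu$ to absorb $(n-k+1)^{\tau-1-\nu}$, and Lemma~\ref{lem-Ineq} via $\kappa+\eta\ge 1$, which is exactly $\alpha-\sigma\le 1$ under \eqref{balance-law1}). Your identification of the roles of $\alpha-\sigma\ge 0$ (admissibility of $\tau$ in the one-annulus Gagliardo--Nirenberg inequality) and $\alpha-\sigma\le 1$ matches the structure of the paper's fractional proof, and the $s=1$ case is in fact simpler since the annular Dirichlet energies sum exactly to the global one.
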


We are ready to state the corresponding results in the radial case. We begin with the case $1/ \tau + \gamma/ d \neq 0$. 

\begin{thm}\label{thm1-rad-s=1}
Let $d\geq 2$, $p \ge 1$, $q\geq 1$, $\tau \ge 1$, $0 <  a\leq 1$ and $\alpha, \beta, \gamma \in \ro$. Define $\sigma $ by \eqref{def-alpha-gamma}.  Assume \eqref{balance-law1} and 
\begin{equation}\label{thm1-rad-s=1-alpha-gamma}
- (d-1)   \leq \alpha-\sigma < 0.
\end{equation}
We have, for some positive constant $C$, 
\begin{itemize}
\item[$i)$] if $\frac{1}{\tau}+\frac{\gamma}{d}>0$, then for all radial $u\in L^1_{loc}(\ro^d \setminus \{0\})$ with compact support in $\mR^d$, \eqref{CKN-original-M} holds; 
 \item[$ii)$] if $\frac{1}{\tau}+\frac{\gamma}{d}<0$, then for all radial $u\in L^1_{loc}(\ro^d \setminus \{0 \})$ which is 0 in a neighborhood of $0$, \eqref{CKN-original-M} holds.  
\end{itemize}
\end{thm}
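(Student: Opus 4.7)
The plan is to follow the same strategy as for the fractional analogue \Cref{thm1-rad}: reduce the $d$-dimensional radial inequality to a one-dimensional CKN inequality via polar coordinates and then invoke \Cref{thm3} in dimension one. The setting $s=1$ is actually easier than the fractional case, since for a radial function $u$ with associated one-dimensional profile $\hat u(r) := u(r\sigma_0)$ (for any fixed $\sigma_0 \in \mS^{d-1}$) one has the pointwise identity $|\nabla u(x)| = |\hat u'(|x|)|$. Thus polar coordinates give exact identities for all three norms, with no need for an analogue of \Cref{lem-rad1}.

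Concretely, set $\gamma' := \gamma + (d-1)/\tau$, $\alpha' := \alpha + (d-1)/p$, $\beta' := \beta + (d-1)/q$. Polar coordinates show that the three $d$-dimensional weighted norms $\||x|^\gamma u\|_{L^\tau(\mR^d)}$, $\||x|^\beta u\|_{L^q(\mR^d)}$, $\||x|^\alpha \nabla u\|_{L^p(\mR^d\setminus\{0\})}$ are, up to the respective constants $|\mS^{d-1}|^{1/\tau}$, $|\mS^{d-1}|^{1/q}$, $|\mS^{d-1}|^{1/p}$, equal to the one-dimensional weighted norms of $\hat u$ on $(0,\infty)$ with weights $r^{\gamma'}$, $r^{\beta'}$, $r^{\alpha'}$ respectively. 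Extending $\hat u$ evenly to $\mR$ (its derivative on $\mR\setminus\{0\}$ being the odd extension of $\hat u'$), the target inequality reduces to
\begin{equation*}
\||\xi|^{\gamma'} \hat u\|_{L^\tau(\mR)} \le C \||\xi|^{\alpha'} \hat u'\|_{L^p(\mR \setminus \{0\})}^a \||\xi|^{\beta'}\hat u\|_{L^q(\mR)}^{1-a},
\end{equation*}
which is exactly \Cref{thm3} applied in dimension one to the even extension of $\hat u$, with parameters $(p,q,\tau,a,\alpha',\beta',\gamma')$ — provided the hypotheses of that theorem are met.

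The remaining step is parameter verification, which is purely algebraic. Defining $\sigma'$ by $\gamma' = a\sigma' + (1-a)\beta'$, a short computation using $\gamma = a\sigma+(1-a)\beta$ together with \eqref{balance-law1} gives
\begin{equation*}
\alpha' - \sigma' \;=\; \frac{\alpha-\sigma}{d} + \frac{d-1}{d}.
\end{equation*}
Hence $\alpha' - \sigma' \ge 0$ is equivalent to $\alpha - \sigma \ge -(d-1)$, which is precisely the lower bound in \eqref{thm1-rad-s=1-alpha-gamma}; and the upper bound $\alpha - \sigma < 0$ forces $\alpha' - \sigma' < (d-1)/d < 1$, so the one-dimensional analogue of \eqref{coucou2} is automatic. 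The 1D balance law is obtained by multiplying \eqref{balance-law1} through by $d$ (noting that $1/\tau + \gamma' = d/\tau + \gamma$), and this same identity shows that the sign of $1/\tau + \gamma'$ equals the sign of $1/\tau + \gamma/d$, so cases $i)$ and $ii)$ of the one-dimensional result correspond respectively to cases $i)$ and $ii)$ of the $d$-dimensional target; the support conditions on $u$ translate directly to the required support conditions on the even extension of $\hat u$. I do not anticipate any serious obstacle: the argument is polar coordinates plus an algebraic parameter match, and the absence of a fractional seminorm (which in the fractional case necessitated the estimate in \Cref{lem-rad1}) makes this case genuinely simpler than \Cref{thm1-rad}.
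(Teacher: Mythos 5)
Your proposal is correct and follows exactly the route the paper takes: it states that \Cref{thm1-rad-s=1} is a direct consequence of \Cref{thm3} in dimension one, obtained in the spirit of the proof of \Cref{thm1-rad} but without needing an analogue of \Cref{lem-rad1}, and leaves the details (which you have supplied, including the key computation $\alpha'-\sigma'=\frac{\alpha-\sigma}{d}+\frac{d-1}{d}$ matching the $s=1$ specialization of the identity derived in the proof of \Cref{thm1-rad}) to the reader.
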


Concerning the limiting case $1/ \tau +  \gamma/ d = 0 $, we obtain the following result.  

\begin{thm}\label{thm2-rad-s=1}
Let $d\geq 2$, $p\ge1$, $q\geq 1$, $\tau \ge 1$, $0< a\leq 1$, $\alpha, \beta, \gamma \in \ro$, and $\mu > 1$. Assume that $\tau \le \mu$.  Define $\sigma $ by \eqref{def-alpha-gamma}.  Assume   \eqref{balance-law1} and \eqref{thm1-rad-s=1-alpha-gamma}.
There exists a positive constant $C$ such that for all $u\in L^1_{loc}(\ro^d \setminus \{0 \})$ and for all $0<R_1 < R_2$,  we have 
\begin{itemize}
\item[$i)$] if $\frac{1}{\tau}+\frac{\gamma}{d}=0$ and $\mathrm{supp}\ u \subset B_{R_2}$, then  \eqref{thm4-p1} holds. 

 \item[$ii)$] if $\frac{1}{\tau}+\frac{\gamma}{d}=0$, and $\mathrm{supp} \ u \cap B_{R_1}= \emptyset$, then \eqref{thm4-p2} holds. 
\end{itemize}
\end{thm}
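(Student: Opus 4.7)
\medskip
\noindent\textbf{Proof proposal for Theorem~\ref{thm2-rad-s=1}.} The plan is to mimic the strategy used for Theorems~\ref{thm1-rad} and \ref{thm2-rad}: reduce via polar coordinates to a one-dimensional inequality, then invoke the one-dimensional version of the CKN inequality, which in this case is Theorem~\ref{thm4} (with $d=1$). Write $\hat u(r):=u(r\sigma)$ for a fixed $\sigma\in\mathbb{S}^{d-1}$, and extend $\hat u$ as an even function on $\mathbb{R}$. Since $u$ is radial one has $|\nabla u(x)|=|\hat u'(|x|)|$, so polar coordinates yield
\begin{equation*}
\| |x|^\alpha \nabla u\|_{L^p(\mathbb{R}^d\setminus\{0\})}\sim \| |\xi|^{\alpha+(d-1)/p}\hat u'\|_{L^p(\mathbb{R})},
\end{equation*}
\begin{equation*}
\| |x|^\beta u\|_{L^q(\mathbb{R}^d)}\sim \| |\xi|^{\beta+(d-1)/q}\hat u\|_{L^q(\mathbb{R})},
\end{equation*}
and, writing $L$ for the appropriate logarithmic factor,
\begin{equation*}
\left(\int_{\mathbb{R}^d}\frac{|x|^{\gamma\tau}}{\ln^\mu(2R_2/|x|)}|u|^\tau\,dx\right)^{\!1/\tau}\sim \left(\int_{\mathbb{R}}\frac{|\xi|^{(\gamma+(d-1)/\tau)\tau}}{\ln^\mu(2R_2/|\xi|)}|\hat u|^\tau\,d\xi\right)^{\!1/\tau},
\end{equation*}
and analogously for the case where $\mathrm{supp}\,u\cap B_{R_1}=\emptyset$.

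With these identifications, it suffices to prove a one-dimensional CKN inequality in the limiting regime with parameters
\begin{equation*}
p'=p,\quad q'=q,\quad \tau'=\tau,\quad a'=a,\quad \mu'=\mu,\quad \alpha'=\alpha+\tfrac{d-1}{p},\quad \beta'=\beta+\tfrac{d-1}{q},\quad \gamma'=\gamma+\tfrac{d-1}{\tau}.
\end{equation*}
The one-dimensional balance law $\tfrac{1}{\tau'}+\gamma'=a'(\tfrac{1}{p'}+\alpha'-1)+(1-a')(\tfrac{1}{q'}+\beta')$ amounts to $\tfrac{d}{\tau}+\gamma=a(\tfrac{d}{p}+\alpha-1)+(1-a)(\tfrac{d}{q}+\beta)$, which is precisely $d$ times the $d$-dimensional balance \eqref{balance-law1}. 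The limiting assumption $\tfrac{1}{\tau'}+\gamma'=0$ is equivalent to $\tfrac{1}{\tau}+\tfrac{\gamma}{d}=0$, and the condition $\tau'\le\mu'$ is inherited directly.

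The key verification is the constraint \eqref{thm4-st1}, i.e. $0\le\alpha'-\sigma'\le 1$ where $\sigma'$ is defined by $\gamma'=a\sigma'+(1-a)\beta'$. Repeating the computation from the proof of Theorem~\ref{thm1-rad} with $s=1$ gives
\begin{equation*}
\alpha'-\sigma'=\frac{\alpha-\sigma}{d}+\frac{d-1}{d},
\end{equation*}
so $\alpha'-\sigma'\ge 0\iff \alpha-\sigma\ge -(d-1)$ (our hypothesis \eqref{thm1-rad-s=1-alpha-gamma}), while $\alpha'-\sigma'\le 1\iff \alpha-\sigma\le 1$, which is automatic since $\alpha-\sigma<0$. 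All hypotheses of Theorem~\ref{thm4} (in dimension one) are thus in place. Applying Theorem~\ref{thm4} to $\hat u$ on $\mathbb{R}$ (noting that $\mathrm{supp}\,\hat u\subset(-R_2,R_2)$ in case $(i)$ and $\mathrm{supp}\,\hat u\cap(-R_1,R_1)=\emptyset$ in case $(ii)$) and translating back through the polar-coordinate identifications gives \eqref{thm4-p1} and \eqref{thm4-p2} respectively.

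The only point that requires any real care is the computation of $\alpha'-\sigma'$ and the verification that the one-dimensional parameters still lie in the admissible range for Theorem~\ref{thm4}; once that is done, no additional dyadic or summation argument is needed, because the heavy lifting has already been performed in the proof of Theorem~\ref{thm4} (which is the $s=1$ analogue of Theorem~\ref{thm2-CKNsw} obtained along the lines of \cite{Ng-Squa1}). The even extension of $\hat u$ causes no trouble at the origin since $\hat u'(0)=0$ when $u$ is smooth and radial, and in the general $L^1_{\mathrm{loc}}$ framework one applies the inequality first to smooth approximations and then passes to the limit.
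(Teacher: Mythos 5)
Your proposal is correct and follows exactly the route the paper indicates for this theorem: reduce to dimension one via polar coordinates (using that $|\nabla u(x)|=|\hat u'(|x|)|$ for radial $u$, so no analogue of \Cref{lem-rad1} is needed), verify that the shifted parameters $\alpha'=\alpha+\frac{d-1}{p}$, $\beta'=\beta+\frac{d-1}{q}$, $\gamma'=\gamma+\frac{d-1}{\tau}$ satisfy \eqref{balance-law1} and $0\le\alpha'-\sigma'=\frac{\alpha-\sigma}{d}+\frac{d-1}{d}\le 1$, and apply \Cref{thm4} with $d=1$ to the even extension of $\hat u$. Your computation of $\alpha'-\sigma'$ and the verification of the limiting condition $\frac{1}{\tau'}+\gamma'=0$ match the paper's calculations in the proof of \Cref{thm1-rad} specialized to $s=1$.
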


\begin{remark}  \rm The convention in \Cref{rem-convention} is also used in \Cref{thm3}, \Cref{thm4},  \Cref{thm1-rad-s=1}, and \Cref{thm2-rad-s=1}.  In these theorems, the quantity 
$\nrm{|x|^\alpha \nabla u}_{L^p(\ro^d \setminus \{0 \})}$  is  also considered as infinity if $\nabla u \not \in [L^p_{loc}(\mR^d \setminus \{0 \})]^d$. 
\end{remark}

\begin{remark} \rm
By similar considerations as in \Cref{log-term-necessity}, we can conclude that the $\log$-term is necessary in \Cref{thm2-rad-s=1}.
\end{remark}

\Cref{thm1-rad-s=1} and \Cref{thm2-rad-s=1} are direct consequences of \Cref{thm3} and \Cref{thm4} in the one dimensional case. The proofs are  as in the spirit of the proof of \Cref{thm1-rad} and \Cref{thm2-rad} but simpler where a variant of \Cref{lem-rad1} is not required. The details are left to the reader.

\medskip 
Concerning the optimality of the condition of \eqref{thm1-rad-s=1-alpha-gamma}, we have the following result whose proof is similar to the one of  \Cref{optimality} and omitted.

\begin{pro}\label{optimality}
The condition $\alpha-\sigma\geq -(d-1)$ in \eqref{thm1-rad-s=1-alpha-gamma} is necessary for the assertions in \Cref{thm1-rad-s=1} to hold.
\end{pro}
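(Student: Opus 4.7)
The plan is to mimic the proof of the earlier \Cref{optimality} (for the fractional case) almost verbatim, replacing the fractional seminorm by the weighted gradient norm. Fix once and for all a nontrivial $v\in C_c^\infty(\mR)$ with $\mathrm{supp}\, v\subset (0,1)$ and, for large $R>0$, define the radial test function
\begin{equation*}
u_R(x):=v(|x|-R),\qquad x\in\mR^d,
\end{equation*}
so that $u_R\in C_c^\infty(\mR^d)$ and $\mathrm{supp}\, u_R\subset \mcal{A}_{R,R+1}$. In particular $u_R$ has compact support in $\mR^d$ and vanishes in a neighborhood of the origin (for $R\ge 1$), hence it is an admissible test function in \emph{both} cases $i)$ and $ii)$ of \Cref{thm1-rad-s=1}.

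Set, as in the proof of the previous optimality proposition,
\begin{equation*}
\alpha':=\alpha+\tfrac{d-1}{p},\qquad \beta':=\beta+\tfrac{d-1}{q},\qquad \gamma':=\gamma+\tfrac{d-1}{\tau}.
\end{equation*}
The only new computation (compared to the fractional case) is the scaling of $\||x|^\alpha\nabla u_R\|_{L^p}$. Since $\nabla u_R(x)=v'(|x|-R)\,x/|x|$, we have $|\nabla u_R(x)|=|v'(|x|-R)|$ and, using polar coordinates together with $|x|\sim R$ on $\mathrm{supp}\, u_R$,
\begin{equation*}
\||x|^\alpha\nabla u_R\|_{L^p(\mR^d)}^p=|\mS^{d-1}|\int_R^{R+1} r^{\alpha p+d-1}|v'(r-R)|^p\,dr\ \sim\ R^{\alpha p+d-1},
\end{equation*}
so that $\||x|^\alpha\nabla u_R\|_{L^p(\mR^d\setminus\{0\})}^a\sim R^{a\alpha'}$. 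The same polar-coordinate computation yields $\||x|^\gamma u_R\|_{L^\tau(\mR^d)}\sim R^{\gamma'}$ and $\||x|^\beta u_R\|_{L^q(\mR^d)}^{1-a}\sim R^{(1-a)\beta'}$.

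Plugging $u=u_R$ into \eqref{CKN-original-M} forces
\begin{equation*}
R^{\gamma'}\le C\,R^{a\alpha'+(1-a)\beta'}\qquad \text{for all large }R,
\end{equation*}
with a constant $C$ independent of $R$, and hence $\gamma'\le a\alpha'+(1-a)\beta'$. A short algebraic simplification using the balance law \eqref{balance-law1} and the definition of $\sigma$ gives
\begin{equation*}
a\alpha'+(1-a)\beta'-\gamma'=a(\alpha-\sigma)+\tfrac{d-1}{d}\,a\bigl(1-(\alpha-\sigma)\bigr)=\tfrac{a}{d}\bigl(\alpha-\sigma+d-1\bigr),
\end{equation*}
so the necessary condition reduces precisely to $\alpha-\sigma\ge -(d-1)$, concluding the proof.

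There is no genuine obstacle here: the argument is a pure scaling/concentration test at $|x|\sim R\to\infty$. The only mild care is to observe that the \emph{same} family $u_R$ witnesses optimality in both regimes $1/\tau+\gamma/d>0$ and $1/\tau+\gamma/d<0$ (because $u_R$ simultaneously has compact support and vanishes near $0$), and to verify the algebraic identity above which is an immediate consequence of \eqref{balance-law1} and \eqref{def-alpha-gamma}; the details are straightforward and may be omitted as in the statement of the proposition.
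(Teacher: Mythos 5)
Your proposal is correct and follows essentially the same route as the paper, which explicitly omits this proof as being "similar to the one of" the fractional optimality proposition: the same annular test functions $u_R(x)=v(|x|-R)$, the same scaling computation giving $\gamma'\le a\alpha'+(1-a)\beta'$, and the same algebraic reduction (via \eqref{balance-law1} and \eqref{def-alpha-gamma}) to $\alpha-\sigma\ge-(d-1)$. Your explicit polar-coordinate evaluation of $\||x|^\alpha\nabla u_R\|_{L^p}\sim R^{\alpha'}$ and your remark that the single family $u_R$ is admissible in both regimes $1/\tau+\gamma/d\gtrless 0$ are correct and match what the paper intends.
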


We end this section by mentioning what has been proved previously.  In the case $1/ \tau + \gamma/ d > 0$, under the following additional requirement (see \cite[the first inequality in (1.8) and (1.10)]{DDD12})  
$$
\frac{a (\alpha - 1 - \sigma)}{d} + \frac{1- a}{q} \ge 0 \quad \mbox{ and } \quad \frac{1}{p} + \frac{\alpha - 1}{d} > 0, 
$$
assertion $i)$ of \Cref{thm1-rad-s=1}  was previously proved in \cite{DDD12} by a different approach via the Riesz potential and inequalities on fractional integrations.

\section{Applications to the compactness} \label{sect-compact}

In this section, we derive several compactness results from previous inequalities for radial case. We only consider the case $1 / \tau + \gamma / d > 0$. We begin with the following result. 

\begin{pro}\label{pro-compact-1} Let $d \ge 1$, $0 < s < 1$, $p> 1$, $q \ge 1$,  $\tau \ge 1$, $0< a < 1$, $\alpha,  \, \beta,  \, \gamma \in \mR$, and $\Lambda > 1$ be such that $1 / \tau + \gamma / d > 0$. Define $\sigma $ by \eqref{def-alpha-gamma}. 
Assume  \eqref{balance-law2},   
$$
\alpha - \sigma >  0, \quad \mbox{ and } \quad \frac{1}{p} + \frac{\alpha - s}{d} \neq \frac{1}{q} + \frac{\beta}{d}. 
$$
Assume that the embedding $W^{s, p}(B_1) \cap L^q(B_1)$ into $L^\tau(B_1)$ is compact. 
Let $(u_n)_n \subset L^1_{loc}(\mR^d)$ with compact support be such that the sequences $\big(\| u_n \|_{\dot{W}^{s, p, \alpha, \Lambda}(\mR^d)} \big)_n$ and $\big(\| |x|^\beta u_n\|_{L^q(\mR^d)} \big)_n$ are bounded. Then, up to a subsequence, $(|x|^\gamma u_n)_n$ converges in $L^\tau(\mR^d)$. 
\end{pro}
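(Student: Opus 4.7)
The plan has three stages: local $L^\tau$ compactness on dyadic annuli, uniform tail estimates at $0$ and $\infty$, and a triangle-inequality combination.

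On each dyadic annulus $\mathcal{A}_k=\{2^k\le|x|<2^{k+1}\}$ the weight $|x|^{\alpha p}$ is pinched between two positive constants depending on $k$, so the boundedness of $\|u_n\|_{\dot W^{s,p,\alpha,\Lambda}(\mR^d)}$ and of $\|\,|x|^\beta u_n\|_{L^q(\mR^d)}$ gives a bound for $(u_n)$ in $W^{s,p}(\mathcal{A}_k)\cap L^q(\mathcal{A}_k)$. The assumed compact embedding on $B_1$ transfers to each $\mathcal{A}_k$ by a standard localization/scaling argument, and a Cantor diagonal extraction then produces a subsequence $(u_{n_j})$ and a limit $u$ with $u_{n_j}\to u$ in $L^\tau(\mathcal{A}_k)$ for every $k\in\mathbb{Z}$. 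Since $|x|^\gamma$ is bounded on each $\mathcal{A}_k$, it follows that $|x|^\gamma u_{n_j}\to|x|^\gamma u$ in $L^\tau(\mathcal{A}_k)$.

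The heart of the argument is to produce two auxiliary CKN exponents. Set $A:=\frac{1}{p}+\frac{\alpha-s}{d}$ and $B:=\frac{1}{q}+\frac{\beta}{d}$, so the balance law \eqref{balance-law2} reads $\frac{1}{\tau}+\frac{\gamma}{d}=aA+(1-a)B$ with $A\neq B$ by hypothesis; since $a\in(0,1)$, this common value differs from $A$. I claim that for each sign $\pm$ one can find a triple $(\tilde a_{\pm},\tilde\tau_{\pm},\tilde\gamma_{\pm})$ satisfying \eqref{balance-law2} and \eqref{cond-s-1}--\eqref{cond-s-2} with $\tilde\tau_{\pm}>\tau$ and $\pm\bigl(\tfrac{1}{\tilde\tau_{\pm}}+\tfrac{\tilde\gamma_{\pm}}{d}-\tfrac{1}{\tau}-\tfrac{\gamma}{d}\bigr)>0$. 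Indeed, perturbing $(a,\tau,\gamma)$ to a nearby $(\tilde a,\tilde\tau,\tilde\gamma)$ with $\tilde\tau$ slightly greater than $\tau$ leaves one free parameter $\tilde a$; varying $\tilde a$ on either side of $a$ moves $\tfrac{1}{\tilde\tau}+\tfrac{\tilde\gamma}{d}=\tilde aA+(1-\tilde a)B$ in either direction because $A\neq B$; the strict inequality $\alpha-\sigma>0$ preserves \eqref{cond-s-1} by continuity of $\tilde\sigma$; and \eqref{cond-s-2} is vacuous because $\tfrac{1}{\tilde\tau}+\tfrac{\tilde\gamma}{d}\neq A$ for small perturbations. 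Theorem \ref{thm1-CKNsw} then yields uniform bounds $\|\,|x|^{\tilde\gamma_{\pm}} u_n\|_{L^{\tilde\tau_{\pm}}(\mR^d)}\le C$. Applying H\"older's inequality with exponents $\tilde\tau_{+}/\tau$ and $\tilde\tau_{+}/(\tilde\tau_{+}-\tau)$,
\begin{equation*}
\int_{|x|>R}|x|^{\gamma\tau}|u_n|^\tau\,dx\le\Bigl(\int_{|x|>R}|x|^{(\gamma-\tilde\gamma_{+})\tau\tilde\tau_{+}/(\tilde\tau_{+}-\tau)}\,dx\Bigr)^{\!1-\tau/\tilde\tau_{+}}\|\,|x|^{\tilde\gamma_{+}}u_n\|_{L^{\tilde\tau_{+}}}^{\tau}.
\end{equation*}
The condition $\tfrac{1}{\tilde\tau_{+}}+\tfrac{\tilde\gamma_{+}}{d}>\tfrac{1}{\tau}+\tfrac{\gamma}{d}$ is equivalent, by direct computation, to the weight exponent being strictly less than $-d$, so the first factor is finite and tends to $0$ as $R\to\infty$ while the second is uniformly bounded; hence $\sup_n\int_{|x|>R}|x|^{\gamma\tau}|u_n|^\tau\,dx\to 0$. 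The symmetric H\"older estimate with $(\tilde\tau_{-},\tilde\gamma_{-})$ delivers $\sup_n\int_{|x|<r}|x|^{\gamma\tau}|u_n|^\tau\,dx\to 0$ as $r\to 0$.

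Fatou's lemma transfers the same tail bounds to the limit $u$. Given $\varepsilon>0$, choose $r<R$ so that both tail integrals for every $u_n$ and for $u$ are less than $\varepsilon$; on the compact region $\{r\le|x|\le R\}$, which is a finite union of the $\mathcal{A}_k$, the first stage supplies $L^\tau$-convergence, and the triangle inequality concludes that $|x|^\gamma u_{n_j}\to|x|^\gamma u$ in $L^\tau(\mR^d)$. The main obstacle is the perturbation construction in the second stage: simultaneously achieving $\tilde\tau_{\pm}>\tau$ and opposite signs of $\bigl(\tfrac{1}{\tilde\tau_{\pm}}+\tfrac{\tilde\gamma_{\pm}}{d}\bigr)-\bigl(\tfrac{1}{\tau}+\tfrac{\gamma}{d}\bigr)$ while preserving \eqref{cond-s-1}--\eqref{cond-s-2}. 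The hypotheses $\alpha-\sigma>0$ and $A\neq B$ provide exactly the room needed for both directions.
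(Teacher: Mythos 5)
Your proof is correct and follows essentially the same route as the paper's: both exploit $A\neq B$ together with the strict inequality $\alpha-\sigma>0$ to perturb the CKN parameters, invoke \Cref{thm1-CKNsw} to get uniform bounds on nearby weighted norms, deduce uniform smallness of the tails at $0$ and at $\infty$, and conclude by local compactness plus a diagonal extraction. The only (harmless) difference is that the paper keeps $\tilde\tau=\tau$ and perturbs just $(a,\gamma)$ to $(a',\gamma\pm\varepsilon)$, so the tails are controlled by simply bounding $|x|^{\mp\varepsilon\tau}$ on $\{|x|>R\}$ and $\{|x|<r\}$, rather than by your H\"older step with $\tilde\tau>\tau$.
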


\begin{proof} One just notes that for $\gamma'$ sufficiently close to $\gamma$, one can choose $0< a' < 1$ close to $a$ such that the assumptions  of \Cref{pro-compact-1} hold with $(a, \gamma)$ being replaced by $(a', \gamma')$. This implies, by \Cref{thm1-CKNsw} (see also \eqref{CKN-fractional1}) that, for $\eps > 0$ sufficiently small, 
$$
(\| |x|^{\gamma + \eps} u_n\|_{L^\tau}) \mbox{ and } (\| |x|^{\gamma - \eps} u_n\|_{L^\tau}) \mbox{ are bounded}.  
$$
The conclusion follows since the embedding $W^{s, p}(B_R) \cap L^q(B_R)$ into $L^\tau(B_R)$ is compact for $R>0$. 
\end{proof}

In the case $s=1$, one has the following result, whose proof is almost identical and  omitted.
 
\begin{pro}\label{pro-compact-1-s=1} Let $d \ge 1$, $p \ge 1$, $q \ge 1$,  $\tau \ge 1$, $0< a < 1$, and  $\alpha, \beta, \gamma \in \mR$ be such that $1 / \tau + \gamma / d > 0$. Define $\sigma $ by \eqref{def-alpha-gamma}. 
Assume \eqref{balance-law1}, 
$$
\alpha - \sigma >  0, \quad \mbox{ and } \quad \frac{1}{p} + \frac{\alpha - 1}{d} \neq \frac{1}{q} + \frac{\beta}{d}. 
$$
Assume that the embedding $W^{1, p}(B_1) \cap L^q(B_1)$ into $L^\tau(B_1)$ is compact. 
Let $(u_n)_n \subset L^1_{loc}(\mR^d)$ with compact support be such that the sequences $\big(\| |x|^\alpha \nabla u_n \|_{L^{p}(\mR^d)} \big)_n$ and $\big(\| |x|^\beta u_n\|_{L^q(\mR^d)} \big)_n$ are bounded. Then, up to a subsequence, $\big(|x|^\gamma u_n \big)_n$ converges in $L^\tau(\mR^d)$. 
\end{pro}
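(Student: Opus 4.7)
The plan is to mimic the proof of \Cref{pro-compact-1} verbatim, replacing the fractional CKN inequality \Cref{thm1-CKNsw} by its classical counterpart \Cref{thm3}. The conceptual core is that the strict inequality $\alpha-\sigma>0$ gives room to perturb the exponent $\gamma$ up and down without losing the CKN inequality, and the non-degeneracy condition $\frac{1}{p}+\frac{\alpha-1}{d}\neq \frac{1}{q}+\frac{\beta}{d}$ is exactly what makes the perturbation well-defined.

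First I would open up the parameter region. Writing $A:=\frac{1}{p}+\frac{\alpha-1}{d}$ and $B:=\frac{1}{q}+\frac{\beta}{d}$, the balance law \eqref{balance-law1} reads $\frac{1}{\tau}+\frac{\gamma}{d}=aA+(1-a)B$. Since $A\neq B$, for each small $\delta$ there is a unique $a(\delta)\in(0,1)$ close to $a$ with $\frac{1}{\tau}+\frac{\gamma+\delta}{d}=a(\delta)A+(1-a(\delta))B$, and the corresponding $\sigma(\delta)$ defined by $\gamma+\delta=a(\delta)\sigma(\delta)+(1-a(\delta))\beta$ depends continuously on $\delta$ with $\sigma(0)=\sigma$. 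The strict inequality $\alpha-\sigma>0$ therefore persists as $\alpha-\sigma(\delta)>0$ for $|\delta|$ small, while $\frac{1}{\tau}+\frac{\gamma+\delta}{d}>0$ remains valid and the equality case of \eqref{coucou2} is never triggered (it would force $A=B$ when $a(\delta)\in(0,1)$). Hence for some $\varepsilon_0>0$, all hypotheses of \Cref{thm3}$(i)$ hold with $(a,\gamma)$ replaced by $(a(\pm\varepsilon),\gamma\pm\varepsilon)$ for every $0<\varepsilon\le\varepsilon_0$.

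Applying \Cref{thm3}$(i)$ with these two perturbed choices to each $u_n$ and observing that $\||x|^{\alpha}\nabla u_n\|_{L^p}$ and $\||x|^{\beta} u_n\|_{L^q}$ are uniformly bounded, we obtain, for some $C$ independent of $n$,
\[
\||x|^{\gamma+\varepsilon}u_n\|_{L^\tau(\mR^d)}+\||x|^{\gamma-\varepsilon}u_n\|_{L^\tau(\mR^d)}\le C.
\]
These two bounds translate into tightness of the weighted measures $|x|^{\gamma\tau}|u_n|^\tau\,dx$ simultaneously at infinity and near the origin: for any $R>1>r>0$,
\[
\int_{|x|>R}|x|^{\gamma\tau}|u_n|^\tau\,dx\le R^{-\varepsilon\tau}C^\tau,\qquad \int_{|x|<r}|x|^{\gamma\tau}|u_n|^\tau\,dx\le r^{\varepsilon\tau}C^\tau.
\]

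Finally I would treat the annular region. On any annulus $A_{r,R}:=\{r\le|x|\le R\}$ the weights $|x|^\alpha$ and $|x|^\beta$ are pinched between positive constants, so $(\nabla u_n)$ is bounded in $L^p(A_{r,R})$ and $(u_n)$ is bounded in $L^q(A_{r,R})$. A standard covering and scaling argument reduces the compact embedding $W^{1,p}(B_1)\cap L^q(B_1)\hookrightarrow L^\tau(B_1)$ to the same statement on $A_{r,R}$, yielding a subsequence of $(u_n)$ that converges in $L^\tau(A_{r,R})$, and therefore of $(|x|^\gamma u_n)$ as well. A diagonal extraction over $r\downarrow 0$ and $R\uparrow\infty$, combined with the tail estimates above, produces a Cauchy subsequence of $(|x|^\gamma u_n)$ in $L^\tau(\mR^d)$, proving the claim. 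The only real obstacle is the verification of the perturbation in the second paragraph, and it is precisely here that the two hypotheses $\alpha-\sigma>0$ and $A\neq B$ are used in an essential way.
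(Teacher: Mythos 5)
Your proof is correct and is essentially the paper's own argument: the paper omits the proof as ``almost identical'' to that of \Cref{pro-compact-1}, which is exactly the perturbation you carry out --- using $\frac1p+\frac{\alpha-1}{d}\neq\frac1q+\frac{\beta}{d}$ to adjust $a$ so that \Cref{thm3} applies with $\gamma\pm\varepsilon$, deducing uniform bounds on $\||x|^{\gamma\pm\varepsilon}u_n\|_{L^\tau}$, and concluding via the compact local embedding. Your write-up merely supplies more detail (the explicit tightness estimates near $0$ and $\infty$ and the diagonal extraction) than the paper records.
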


Here are the variants for radial functions, whose proof are almost the same and  omitted. 
 
\begin{pro} \label{pro-compact-2} Let $d \ge 2$, $0 < s < 1$, $p> 1$, $q \ge 1$,  $\tau \ge 1$, $0< a < 1$, $\alpha,  \, \beta, \,  \gamma \in \mR$, and $\Lambda > 1$ be such that $1 / \tau + \gamma / d > 0$. Define $\sigma $ by \eqref{def-alpha-gamma}. 
Assume  \eqref{balance-law2}, 
$$
\alpha - \sigma > - (d-1)s, \quad \mbox{ and } \quad \frac{1}{p} + \frac{\alpha - s}{d} \neq \frac{1}{q} + \frac{\beta}{d}. 
$$
Assume that the embedding $W^{s, p}(B_1) \cap L^q(B_1)$ into $L^\tau(B_1)$ is compact. 
Let $(u_n)_n \subset L^1_{loc}(\mR^d)$ be radial  such that the sequences $\big(\| u_n \|_{\dot{W}^{s, p, \alpha, \Lambda}(\mR^d)} \big)_n$ and $\big(\| |x|^\beta u_n\|_{L^q(\mR^d)} \big)_n$ are bounded. Then, up to a subsequence, $\big(|x|^\gamma u_n\big)_n$ converges in $L^\tau(\mR^d)$. 
\end{pro}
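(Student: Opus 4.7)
My approach is to mirror the proof of Proposition \ref{pro-compact-1} line by line, replacing the use of Theorem \ref{thm1-CKNsw} by its radial variant Theorem \ref{thm1-rad} and exploiting the relaxation of the admissible range from $\alpha-\sigma>0$ to $\alpha-\sigma>-(d-1)s$ afforded by radiality. The goal is to extract a subsequence of $(|x|^\gamma u_n)_n$ that converges in $L^\tau(\mR^d)$.

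\textbf{Step 1 (perturbation of the exponents).} Because $\tfrac{1}{p}+\tfrac{\alpha-s}{d}\neq\tfrac{1}{q}+\tfrac{\beta}{d}$, the balance law \eqref{balance-law2} with $\gamma$ replaced by $\gamma':=\gamma\pm\eps$ has, for each small $\eps>0$, a unique solution $a'=a'(\eps)$, with $a'\to a$ as $\eps\to 0$; correspondingly the associated $\sigma'$ defined by $\gamma'=a'\sigma'+(1-a')\beta$ satisfies $\sigma'\to\sigma$. Since the hypotheses $0<a<1$, $\alpha-\sigma>-(d-1)s$, and $\tfrac{1}{\tau}+\tfrac{\gamma}{d}>0$ are all strict, each of them is preserved under the perturbation for $\eps$ small enough, so Theorem \ref{thm1-rad}$(i)$ applies with the shifted data $(a',\gamma\pm\eps)$.

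\textbf{Step 2 (uniform bounds for shifted weights).} Applying Theorem \ref{thm1-rad}$(i)$ to each $u_n$ with both signs of the perturbation, after a routine density argument to remove the compact support assumption on $u_n$ (made harmless by the fact that $\dot W^{s,p,\alpha,\Lambda}$ is effectively annulus-local through the cutoff $\chi_\Lambda$, so that a smooth radial truncation can be inserted without uncontrolled commutator terms), yields
\begin{equation*}
\sup_{n}\,\big\||x|^{\gamma+\eps}u_n\big\|_{L^\tau(\mR^d)}+\sup_{n}\,\big\||x|^{\gamma-\eps}u_n\big\|_{L^\tau(\mR^d)}<\infty .
\end{equation*}

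\textbf{Step 3 (tightness and local compactness).} The bound with weight $\gamma-\eps$ yields tightness near the origin: for every $r\in(0,1)$,
\begin{equation*}
\int_{|x|<r}|x|^{\gamma\tau}|u_n|^\tau\,dx\;\le\; r^{\eps\tau}\,\big\||x|^{\gamma-\eps}u_n\big\|_{L^\tau(\mR^d)}^\tau\;\lesssim\; r^{\eps\tau},
\end{equation*}
and symmetrically, the bound with weight $\gamma+\eps$ yields $\int_{|x|>R}|x|^{\gamma\tau}|u_n|^\tau\,dx\lesssim R^{-\eps\tau}$, both uniformly in $n$. On every annulus $A_{r,R}:=\{r<|x|<R\}$, the weights $|x|^\alpha$ and $|x|^\beta$ are bounded above and below by positive constants, so $(u_n)_n$ is bounded in $W^{s,p}(A_{r,R})\cap L^q(A_{r,R})$; the assumed compact embedding (transferred from $B_1$ to $A_{r,R}$ by scaling and a finite covering) then provides, up to a subsequence, convergence in $L^\tau(A_{r,R})$ and hence in the weighted $L^\tau$ norm there. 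A diagonal extraction over sequences $r_k\downarrow 0$ and $R_k\uparrow\infty$, combined with the tightness estimates above, produces a subsequence that is Cauchy in $L^\tau(|x|^{\gamma\tau}dx;\mR^d)$, as required.

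The principal technical point, absent in Proposition \ref{pro-compact-1}, is that here the $u_n$ are not assumed to have compact support, so the CKN bound of Theorem \ref{thm1-rad} must be extended by approximation; this is the only place in the argument where genuine care is required, and the extension succeeds precisely because of the annular locality of the semi-norm $\dot W^{s,p,\alpha,\Lambda}$.
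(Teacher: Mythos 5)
Your proposal is correct and follows the paper's intended route: the paper proves this proposition exactly as it proves \Cref{pro-compact-1}, i.e.\ by perturbing $\gamma$ to $\gamma\pm\eps$ and adjusting $a$ via the balance law (which is where the hypothesis $\frac1p+\frac{\alpha-s}{d}\neq\frac1q+\frac{\beta}{d}$ enters), invoking the radial CKN inequality of \Cref{thm1-rad} — and \Cref{thm1-CKNsw} when the perturbed $\alpha-\sigma'$ happens to be nonnegative, since \Cref{thm1-rad} only covers $\alpha-\sigma'<0$ — to bound $\sup_n\||x|^{\gamma\pm\eps}u_n\|_{L^\tau}$, and then concluding by tightness plus the compact embedding on annuli, exactly as in your Step~3. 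The one place where your sketch is thinner than it claims is the removal of the compact-support hypothesis: a radial cutoff at scale $R$ produces a commutator term in the $\dot W^{s,p,\alpha,\Lambda}$ semi-norm of order $R^{\alpha p-sp}\int_{\{R/\Lambda<|x|<2\Lambda R\}}|u_n|^p\,dx$, which is not obviously controlled by $\|u_n\|_{\dot W^{s,p,\alpha,\Lambda}}$ and $\||x|^\beta u_n\|_{L^q}$ for all admissible exponents, so the assertion ``without uncontrolled commutator terms'' needs an actual estimate (or one should instead rerun the telescoping argument in the proof of \Cref{thm1-CKNsw} for non-compactly-supported $u$ and check that the boundary term $2^{(\gamma\tau+d)n}\bigl|\fint_{\mathcal{A}_n}u\bigr|^\tau$ can be discarded as $n\to\infty$).
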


\begin{pro}\label{pro-compact-1-s=1} 
Let $d \ge 2$, $p \ge 1$, $q \ge 1$,  $\tau \ge 1$, $0< a < 1$,  and $\alpha, \beta, \gamma \in \mR$. Define $\sigma $ by \eqref{def-alpha-gamma}.
Assume \eqref{balance-law1},  
$$
\alpha - \sigma >  -(d-1), \quad \mbox{ and } \quad \frac{1}{p} + \frac{\alpha - 1}{d} \neq \frac{1}{q} + \frac{\beta}{d}. 
$$
Assume that the embedding $W^{1, p}(B_1) \cap L^q(B_1)$ into $L^\tau(B_1)$ is compact. 
Let $(u_n)_n \subset L^1_{loc}(\mR^d)$ be radial with compact support  such that the sequences $\big(\| |x|^\alpha \nabla u_n \|_{L^{ p}(\mR^d)} \big)_n$ and $\big(\| |x|^\beta u_n\|_{L^q(\mR^d)} \big)_n$ are bounded. Then, up to a subsequence, $\big(|x|^\gamma u_n \big)_n$ converges in $L^\tau(\mR^d)$. 
\end{pro}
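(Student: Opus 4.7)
The plan is to imitate the proof of \Cref{pro-compact-1}, now invoking \Cref{thm1-rad-s=1} (together with \Cref{thm3} when $\alpha - \sigma \ge 0$) in place of the fractional result. The idea is to perturb the exponent $\gamma$ by a small $\pm\eps$, simultaneously adjusting $a$ via the balance law, so that \Cref{thm1-rad-s=1} (or \Cref{thm3}) gives uniform bounds on $\||x|^{\gamma \pm \eps} u_n\|_{L^\tau(\mR^d)}$; these bounds provide tightness at $0$ and $\infty$, while Rellich--Kondrachov on annuli supplies relative compactness in between.

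\textbf{Perturbation of parameters.} For $|\eps|$ small, define $a_\eps$ by requiring the balance law \eqref{balance-law1} to hold with $\gamma$ replaced by $\gamma+\eps$ and $a$ replaced by $a_\eps$. Because $\frac{1}{p}+\frac{\alpha-1}{d} \neq \frac{1}{q}+\frac{\beta}{d}$, the equation $\frac{\eps}{d} = (a_\eps - a)\bigl[(\tfrac{1}{p}+\tfrac{\alpha-1}{d}) - (\tfrac{1}{q}+\tfrac{\beta}{d})\bigr]$ has a unique solution $a_\eps \to a$ as $\eps \to 0$. The corresponding $\sigma_\eps$, defined by $\gamma+\eps = a_\eps\sigma_\eps + (1-a_\eps)\beta$, satisfies $\sigma_\eps \to \sigma$, so the strict inequality $\alpha - \sigma > -(d-1)$ and the condition $1/\tau + \gamma/d > 0$ persist for $|\eps|$ small. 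Depending on the sign of $\alpha - \sigma_\eps$ we apply either \Cref{thm3} (if $\alpha - \sigma_\eps \ge 0$) or \Cref{thm1-rad-s=1} (if $-(d-1) \le \alpha - \sigma_\eps < 0$), and in both cases \eqref{CKN-original-M} yields uniform bounds
\begin{equation*}
\sup_n \, \||x|^{\gamma+\eps} u_n\|_{L^\tau(\mR^d)} + \||x|^{\gamma-\eps} u_n\|_{L^\tau(\mR^d)} < \infty.
\end{equation*}

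\textbf{Tightness at $0$ and $\infty$.} For $R$ large and $r$ small, one has
\begin{equation*}
\int_{|x|>R} |x|^{\gamma\tau} |u_n|^\tau \, dx \le R^{-\eps\tau} \||x|^{\gamma+\eps} u_n\|_{L^\tau(\mR^d)}^\tau,
\qquad
\int_{|x|<r} |x|^{\gamma\tau} |u_n|^\tau \, dx \le r^{\eps\tau} \||x|^{\gamma-\eps} u_n\|_{L^\tau(\mR^d)}^\tau,
\end{equation*}
and both tails go to $0$ uniformly in $n$ as $R \to \infty$, $r \to 0^+$.

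\textbf{Compactness on annuli and diagonal extraction.} On a fixed annulus $A_{r,R} := B_R \setminus \overline{B_r}$, $|x|^\alpha$ and $|x|^\beta$ are bounded above and below, so $(u_n)$ is bounded in $W^{1,p}(A_{r,R}) \cap L^q(A_{r,R})$. The assumed compact embedding on $B_1$, transported to $A_{r,R}$ by scaling and a standard finite cover, yields a subsequence converging in $L^\tau(A_{r,R})$. A diagonal extraction over $r = 1/k$, $R = k$ combined with the tail bounds from Step 2 produces a single subsequence for which $(|x|^\gamma u_n)$ is Cauchy in $L^\tau(\mR^d)$.

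The only delicate point is the openness verification in the perturbation step; the rest is standard. Radial symmetry is used only through \Cref{thm1-rad-s=1}, which is what enables the range $\alpha - \sigma > -(d-1)$ (non-radial compactness would be restricted to $\alpha - \sigma > 0$).
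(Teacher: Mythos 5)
Your proposal is correct and follows essentially the same route as the paper, which proves the analogous fractional/non-radial statement (\Cref{pro-compact-1}) by exactly this perturbation of $(\gamma,a)$ along the balance law to get uniform bounds on $\||x|^{\gamma\pm\eps}u_n\|_{L^\tau}$, and then declares the radial $s=1$ case ``almost the same.'' Your write-up is in fact slightly more explicit than the paper's (annuli instead of balls, diagonal extraction spelled out), and the one hypothesis you leave implicit --- condition \eqref{coucou2} for \Cref{thm3} --- is automatic here because $0<a<1$ together with $\frac{1}{p}+\frac{\alpha-1}{d}\neq\frac{1}{q}+\frac{\beta}{d}$ rules out the equality $\frac{1}{\tau}+\frac{\gamma}{d}=\frac{1}{p}+\frac{\alpha-1}{d}$.
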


We obtain  the following corollary after using the density of the radial functions $C^\infty_c(\mR^d)$ in the class of radial functions in $W^{s, p}(\mR^d)$. 
 
\begin{cor} Let $d \ge 2$, $0 < s \le 1$, $p \ge 1$ and $sp<d$. Assume that  $ p< \tau < pd/ (d-sp)$ and ($p>1$ if $s < 1$). 
Let $\gamma_1> 0$ and $\gamma_2< 0$ be such that, for $j=1, 2$,  
$$
\frac{1}{p} - \frac{s}{d} < \frac{1}{\tau} + \frac{\gamma_j}{d} < \frac{1}{p}. 
$$
Then the  embedding $W^{s, p}(\mR^d)$ into $L^{\tau}(|x|^{\gamma_j}, \mR^d)$ for radial functions  is compact. As a consequence,  the  embedding $W^{s, p}(\mR^d)$ into $L^{\tau}( \mR^d)$ in the class of radial functions is compact. 
\end{cor}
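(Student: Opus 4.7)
The plan is to apply Proposition~\ref{pro-compact-2} (when $0<s<1$) or its $s=1$ analogue with $\alpha=\beta=0$ and $q=p$ to obtain compactness into each weighted space $L^\tau(|x|^{\gamma_j},\mR^d)$, and then deduce the unweighted embedding from a simple pointwise bound comparing the two weights.

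Fix $\gamma=\gamma_j$ and set $\alpha=\beta=0$, $q=p$. The balance law \eqref{balance-law2} (respectively \eqref{balance-law1}) uniquely determines $a=\tfrac{d}{s}\bigl(\tfrac{1}{p}-\tfrac{1}{\tau}-\tfrac{\gamma}{d}\bigr)$, and this value lies in $(0,1)$ precisely because $\tfrac{1}{p}-\tfrac{s}{d}<\tfrac{1}{\tau}+\tfrac{\gamma}{d}<\tfrac{1}{p}$. One then has $\sigma=\gamma/a$ and $\alpha-\sigma=-\gamma/a$: for $\gamma_2<0$ this is positive, and for $\gamma_1>0$ small enough (namely $\gamma_1<a(d-1)s$, equivalently $\gamma_1<(d-1)(1/p-1/\tau)$) it remains strictly larger than $-(d-1)s$. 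The non-degeneracy $\tfrac{1}{p}+\tfrac{\alpha-s}{d}\neq\tfrac{1}{q}+\tfrac{\beta}{d}$ reduces to $s\neq 0$ and is automatic.

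For any radial sequence $(u_n)$ bounded in $W^{s,p}(\mR^d)$, the elementary bounds $\|u_n\|_{\dot W^{s,p,0,\Lambda}(\mR^d)}\le C\|u_n\|_{W^{s,p}(\mR^d)}$ and $\|u_n\|_{L^p(\mR^d)}\le\|u_n\|_{W^{s,p}(\mR^d)}$, combined with the classical Rellich--Kondrachov compactness of $W^{s,p}(B_1)\cap L^p(B_1)$ into $L^\tau(B_1)$ in the subcritical range $p<\tau<pd/(d-sp)$, verify all hypotheses of the invoked proposition. Hence, up to a subsequence, $|x|^{\gamma_j}u_n$ converges in $L^\tau(\mR^d)$, and a diagonal extraction yields a single subsequence convergent in both $L^\tau(|x|^{\gamma_1},\mR^d)$ and $L^\tau(|x|^{\gamma_2},\mR^d)$. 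Density of radial $C^\infty_c(\mR^d)$ in the radial part of $W^{s,p}(\mR^d)$ legitimises identifying the Gagliardo semi-norm of $u_n$ with $\|u_n\|_{\dot W^{s,p,0,\Lambda}(\mR^d)}$ up to a multiplicative constant.

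For the unweighted conclusion, observe that $\gamma_1>0>\gamma_2$ yields the elementary pointwise bound $1\le |x|^{\gamma_1\tau}+|x|^{\gamma_2\tau}$ for a.e.\ $x\in\mR^d$, since the first term is $\ge 1$ on $\{|x|\ge 1\}$ and the second on $\{|x|\le 1\}$. Multiplying by $|u_n-u_m|^\tau$ and integrating gives
\[
\|u_n-u_m\|_{L^\tau(\mR^d)}^\tau\le\||x|^{\gamma_1}(u_n-u_m)\|_{L^\tau(\mR^d)}^\tau+\||x|^{\gamma_2}(u_n-u_m)\|_{L^\tau(\mR^d)}^\tau,
\]
so the common subsequence is Cauchy in $L^\tau(\mR^d)$. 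The main technical subtlety is verifying the strict inequality $\alpha-\sigma>-(d-1)s$ for $\gamma_1>0$, which is only available on the open sub-interval $\gamma_1\in(0,(d-1)(1/p-1/\tau))$ of the admissible range; but since only the existence of some admissible $\gamma_1>0$ and $\gamma_2<0$ is needed to pass from the weighted to the unweighted statement, this is harmless.
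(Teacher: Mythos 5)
Your proposal takes exactly the route the paper intends: specialise \Cref{pro-compact-2} (and its $s=1$ analogue) to $\alpha=\beta=0$, $q=p$, then pass to the unweighted statement via the pointwise bound $1\le |x|^{\gamma_1\tau}+|x|^{\gamma_2\tau}$ and a diagonal extraction. The computations $a=\frac{d}{s}\bigl(\frac{1}{p}-\frac{1}{\tau}-\frac{\gamma}{d}\bigr)\in(0,1)$, $\sigma=\gamma/a$, $\alpha-\sigma=-\gamma/a$ are correct, and the remaining hypotheses (local compactness of $W^{s,p}(B_1)$ into $L^\tau(B_1)$ in the subcritical range, boundedness of the relevant norms, the non-degeneracy condition) are checked correctly. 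The second, unweighted, assertion --- the substantive content of the corollary --- is fully established.

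The point you downplay as a ``technical subtlety'' is in fact a defect of the statement rather than of your argument, and is worth recording. Testing the continuous embedding on the annulus functions $u_R(x)=v(|x|-R)$ used to prove the optimality of \eqref{cond-s-1-*} gives $\||x|^{\gamma_1}u_R\|_{L^\tau(\mR^d)}\sim R^{\gamma_1+(d-1)/\tau}$ while $\|u_R\|_{W^{s,p}(\mR^d)}\sim R^{(d-1)/p}$; hence even the continuous embedding of radial $W^{s,p}(\mR^d)$ into $L^{\tau}(|x|^{\gamma_1},\mR^d)$ fails as soon as $\gamma_1>(d-1)\bigl(\frac{1}{p}-\frac{1}{\tau}\bigr)$, whereas the corollary's hypothesis permits $\gamma_1$ up to $d\bigl(\frac{1}{p}-\frac{1}{\tau}\bigr)$. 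So the first assertion cannot hold on the whole advertised range of $\gamma_1$; your proof covers precisely the range on which it is true, and since $d\bigl(\frac{1}{p}-\frac{1}{\tau}\bigr)-s<(d-1)\bigl(\frac{1}{p}-\frac{1}{\tau}\bigr)$ (because $\tau<pd/(d-sp)$ forces $\frac{1}{p}-\frac{1}{\tau}<\frac{s}{d}\le s$), an admissible $\gamma_1$ in the smaller interval always exists, so the unweighted conclusion is unaffected. Two minor loose ends: for $s=1$ the radial compactness proposition is stated for compactly supported sequences, so a truncation/density remark is needed there; and one should note $\frac{1}{\tau}+\frac{\gamma_2}{d}>\frac{1}{p}-\frac{s}{d}>0$ to verify the hypothesis $\frac{1}{\tau}+\frac{\gamma}{d}>0$ for the negative weight --- both routine.
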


\begin{remark}
\rm The fact that the embedding $W^{s, p}(\mR^d)$ into $L^{\tau}( \mR^d)$ in the class of radial functions is compact is known, see e.g.,   \cite{Strauss77, Lions82, BL1} in the case $s=1$ and 
\cite{SS00} in the case $0 < s < 1$ (whose proof is based on the atomic decomposition). The ideas to derive the compactness as presented here are quite standard, see,  e.g.,   \cite{BelFrVis}. 
\end{remark}

\bigskip
\noindent{\bf Data statement:} Data sharing is not applicable to this article as no datasets were generated or analysed during the current study.

\bigskip
\noindent{\bf Conflict of interest:} There is no conflict of interest.

\providecommand{\bysame}{\leavevmode\hbox to3em{\hrulefill}\thinspace}
\providecommand{\MR}{\relax\ifhmode\unskip\space\fi MR }
\providecommand{\MRhref}[2]{%
  \href{http://www.ams.org/mathscinet-getitem?mr=#1}{#2}
}
\providecommand{\href}[2]{#2}

\end{document}